\newtheorem{theorem}{Theorem}[section]
\newtheorem{lemma}[theorem]{Lemma}
\theoremstyle{remark}
\newtheorem{remark}[theorem]{Remark}
\newcommand{\R}{\mathbb{R}}
\newcommand{\Ga}{\Gamma}
\DeclareMathOperator{\Mod}{Mod}
\DeclareMathOperator{\supp}{supp}
\author{Nathan Albin, Kapila Kottegoda and Pietro Poggi-Corradini\\
\footnotesize{Department of Mathematics, Kansas State University, Manhattan, KS}}
\title{An exact-arithmetic algorithm for spanning tree modulus}
\begin{document}
\maketitle
\begin{abstract}
Spanning tree modulus is a generalization of effective resistance that is closely related to graph strength and fractional arboricity. The optimal edge density associated with spanning tree modulus is known to produce two hierarchical decompositions of arbitrary graphs, one based on strength and the other on arboricity. Here we introduce an exact-arithmetic algorithm for spanning tree modulus and the strength-based decomposition using Cunningham's algorithm for graph vulnerability.  The algorithm exploits an interesting connection between spanning tree modulus and critical edge sets from the vulnerability problem.  This paper introduces the new algorithm, describes a practical means for implementing it using integer arithmetic, and presents some examples and computational time scaling tests.
\end{abstract}

\section{Introduction}\label{sec:introduction}

Let $G=(V,E)$ be a finite, connected, undirected graph. 
We allow parallel edges, but self-loops are irrelevant and will be removed. There are two quantities that can be computed for $G$ and that describe the relationship between the edge set $E(G)$ and the family $\Ga$ of all spanning trees of $G$. These are the strength and the fractional arboricity. The {\it strength} is the largest constant $S(G)$ that allows one to control the number of additional connected components that are created when removing $A$ from $G$, using the size of $A$. In other words, $S(G)$ is the largest constant with the property that
\[
|A|\ge S(G) (k_A-1),\qquad\forall A\subset E(G),
\]
where $k_A$ is the number of connected components of $G-A.$

On the other hand, the {\it fractional arboricity} is the smallest constant $D(G)$ such that
\[
|A|\le D(G) (|V(A)|-1),\qquad\forall A\subset E(G),
\]
Note that we always have the following inequalities:
\[
S(G)(|V(G)|-1)\le |E(G)| \le D(G)(|V(G)|-1).
\]
As was shown in \cite{achpcst:disc-math2021} (see also \cite{kapila2020,huy-pc:fulkerson,huy-pc:matroid-mod,huy-pc:reinforce}) spanning tree modulus is a convex optimization problem on $G$ that admits a unique optimal density $\eta^*\in \R_{\ge 0}^E$. Moreover, the minimum, $\eta^*_{\rm min}$, is equal to $D(G)^{-1},$ and the maximum, $\eta^*_{\rm max}$, is equal to $S(G)^{-1}.$ In fact, every intermediate value of $\eta^*$ has an interpretation in terms of the graph $G$ and some natural decompositions.

Consequently, an algorithm capable of computing the edge densities $\eta^*$ allows the study of interesting combinatorial properties of graph structures. In this paper, we develop an approach, based on Cunningham's algorithm~\cite{cunningham1985optimal}, that computes the values of $\eta^*$ in exact arithmetic.

\subsection{Spanning tree modulus}
We begin with a brief review of spanning tree modulus (see, e.g.,~\cite{achpcst:disc-math2021,kapila2020,kottegoda2020}).  Consider the family, $\Gamma$, of all spanning trees of a connected, undirected graph $G=(V,E)$.  ($G$ need not be simple.)  A non-negative function $\rho:E\to\mathbb{R}_{\ge 0}$ is called a \emph{density} on $G$.  Each density, $\rho$, gives every spanning tree $\gamma\in\Gamma$ a \emph{$\rho$-length} (or \emph{$\rho$-weight}) $\ell_\rho(\gamma)$ defined as
\begin{equation*}
    \ell_\rho(\gamma) := \sum_{e\in \gamma}\rho(e).
\end{equation*}

The \emph{spanning tree modulus} of $G$ is defined as the solution to the optimization problem
\begin{equation}\label{eq:mod-def}
\begin{split}
\text{minimize}\quad&\sum_{e\in E}\rho(e)^2\\
\text{subject to}\quad &\ell_\rho(\gamma)\ge 1\quad\text{for all $\gamma\in\Gamma$},\\
& \rho(e)\ge 0\quad\text{for all $e\in E$}.
\end{split}
\end{equation}
The decision variables are the values of the \emph{density} $\rho$ assigned to each edge $e\in E$.  The density should be non-negative and is required to give at least unit $\rho$-length to every spanning tree.  The minimum value in~\eqref{eq:mod-def} is called the spanning tree modulus of $G$, denoted by $\Mod(\Gamma)$. The problem admits a unique minimizing density, denoted by $\rho^*$, which is called the \emph{optimal density}.  Thus,
\begin{equation*}
    \Mod(\Gamma) = \sum_{e\in E}\rho^*(e)^2.
\end{equation*}

At first glance, spanning tree modulus appears to be computationally challenging.  The key difficulty lies in the inequality constraints, of which there are $|\Gamma|+|E|$.  Since $|\Gamma|$ tends to be combinatorially large in $|E|$, it is not immediately apparent that an efficient algorithm should exist; indeed, we cannot even hope to identify all constraints in polynomial time.

Nevertheless, there have been some indications that an efficient algorithm does exist.  For example, the iterative approximation algorithm introduced in~\cite{Albin2017} (adapted to the spanning tree setting) performs very well in practice, producing an accurate approximation to modulus even on graphs that have enormous numbers of spanning trees.  Similarly, the Plus-1 algorithm found in~\cite{clemens2018spanning} is able to produce rational approximations to spanning tree modulus with a known convergence rate.  In this paper, we take a different approach, building on Cunningham's graph vulnerability algorithm to produce an exact-arithmetic algorithm for spanning tree modulus.

\subsection{Fairest Edge Usage}

Spanning tree modulus has an interesting dual interpretation in the form of the \emph{Fairest Edge Usage (FEU)} problem~\cite{achpcst:disc-math2021}.  The FEU problem is understood through the context of random spanning trees.  In this interpretation, we consider a spanning tree $\underline{\gamma}$ chosen at random according to a probability mass function (pmf) $\mu$.  (The underline in the notation $\underline{\gamma}$ is used to distinguish the random tree from its possible values.)  In other words, for each $\gamma\in\Gamma$, $\mu(\gamma)$ defines the probability that $\underline{\gamma}=\gamma$ or, in simpler notation, $\mu(\gamma)=\mathbb{P}_\mu(\underline{\gamma}=\gamma)$.  Here we use the subscript notation $\mathbb{P}_\mu$ to specify exactly which pmf is being used.  We will also represent the relationship between the random spanning tree $\underline{\gamma}$ and its pmf $\mu$ by the notation $\underline{\gamma}\sim\mu$.  The set of all pmfs on $\Gamma$ is denoted by $\mathcal{P}(\Gamma)$.

If $e\in E$, $\mu\in\mathcal{P}(\Gamma)$ and $\underline{\gamma}\sim\mu$, then there is a natural concept of \emph{edge usage probability} for each edge.  Namely, what is the probability that $e\in\underline{\gamma}$?  We indicate this quantity with the notation
\begin{equation*}
\eta(e) := \mathbb{P}_\mu(e\in\underline{\gamma}) =
\sum_{\gamma\in\Gamma}\mathbbm{1}_{e\in\gamma}\mu(\gamma),
\quad\text{where}\quad
\mathbbm{1}_{e\in\gamma} :=
\begin{cases}
1 & \text{if }e\in\gamma\\
0 & \text{if }e\notin\gamma.
\end{cases}
\end{equation*}
The FEU problem is the solution to another optimization problem, namely
\begin{equation}\label{eq:FEU}
\begin{split}
\text{minimize}\quad&\sum_{e\in E}\eta(e)^2\\
\text{subject to}\quad&\mu\in\mathcal{P}(\Gamma)\\
& \eta(e) = \mathbb{P}_\mu(e\in\underline{\gamma})\quad\text{for all }e\in E.
\end{split}
\end{equation}
Here, the decision variable is the pmf $\mu$ and the objective is to minimize the 2-norm of the associated edge usage probabilities given by $\eta$.

The optimization problem given in~\eqref{eq:FEU} is called the Fairest Edge Usage problem because it is equivalent to the problem of minimizing the variance of $\eta$.  Roughly speaking, the goal is to assign a pmf to the spanning trees of $G$ in such a way that the edge usage probabilities are as evenly distributed as possible.

The important connections between~\eqref{eq:mod-def} and~\eqref{eq:FEU} in the context of the present work are summarized in the following theorem, collected from those presented in~\cite{achpcst:disc-math2021,albin2019blocking,albin2016minimal}.

\begin{theorem}\label{thm:mod-summary}
For a given, nontrivial, connected, undirected graph $G=(V,E)$, the following are true.
\begin{enumerate}
    \item Problem~\eqref{eq:mod-def} admits a unique minimizing density $\rho^*$.
    \item Problem~\eqref{eq:FEU} admits at least one optimal pmf $\mu^*$.  While this pmf may not be unique, all optimal pmfs induce the same optimal edge usage probabilities, $\eta^*$.  In fact, a pmf is optimal for~\eqref{eq:FEU} if and only if it induces this optimal $\eta^*$.
    \item The optimal $\rho^*$ for~\eqref{eq:mod-def} and the optimal $\eta^*$ for~\eqref{eq:FEU} satisfy the following properties.
    \begin{enumerate}
        \item $\left(\sum\limits_{e\in E}\rho^*(e)^2\right)
        \left(\sum\limits_{e\in E}\eta^*(e)^2\right) = 1$, and
        \item $\eta^*(e) = \frac{\rho^*(e)}{\Mod(\Gamma)}\quad$
        for all $e\in E$.
    \end{enumerate}
\end{enumerate}
\end{theorem}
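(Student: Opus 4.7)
Since the theorem collects results from several prior papers, my plan is to prove each piece in turn using standard convex analysis, with the core content coming from a Cauchy--Schwarz duality argument.

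For item 1, I would observe that the feasible set in~\eqref{eq:mod-def} is non-empty (e.g., $\rho\equiv 1$ gives $\ell_\rho(\gamma)=|V|-1\ge 1$ for any spanning tree of a nontrivial connected graph), closed, and convex in $\mathbb{R}^{|E|}$, while the objective $\rho\mapsto\sum_e\rho(e)^2=\|\rho\|_2^2$ is continuous, strictly convex, and coercive. Hence a unique minimizer $\rho^*$ exists.

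For item 2, the feasible set $\mathcal{P}(\Gamma)$ is a compact simplex in $\mathbb{R}^{|\Gamma|}$ and $\eta$ depends linearly on $\mu$, so $\mu\mapsto\sum_e\eta(e)^2$ is continuous and convex; hence a minimizer $\mu^*$ exists. Uniqueness of $\eta^*$ (but not of $\mu^*$) uses strict convexity of $\|\cdot\|_2^2$ on the edge-probability space: if $\mu_1,\mu_2$ are both optimal and induce $\eta_1\neq\eta_2$, then $\tfrac12(\mu_1+\mu_2)$ induces $\tfrac12(\eta_1+\eta_2)$, and strict convexity would produce a strictly smaller objective. The ``if and only if'' characterization is then immediate, since the objective depends on $\mu$ only through $\eta$.

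For item 3, the key inequality is Cauchy--Schwarz. For any $\mu\in\mathcal{P}(\Gamma)$ and any feasible $\rho$ for~\eqref{eq:mod-def},
\[
1 \;\le\; \mathbb{E}_\mu\!\bigl[\ell_\rho(\underline{\gamma})\bigr] \;=\; \sum_{e\in E}\rho(e)\,\eta(e) \;\le\; \|\rho\|_2\,\|\eta\|_2,
\]
so $\|\rho\|_2^2\,\|\eta\|_2^2\ge 1$, and therefore $\Mod(\Gamma)\sum_{e}\eta^*(e)^2\ge 1$. For the matching upper bound I would invoke strong convex duality between~\eqref{eq:mod-def} and~\eqref{eq:FEU}: Slater's condition is satisfied (take $\rho\equiv 1+\epsilon$), so there is no duality gap. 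Equality in the chain above then forces proportionality $\rho^*=c\,\eta^*$ for some $c\ge 0$; substituting into $\sum_e\rho^*(e)\eta^*(e)=1$ gives $c=1/\|\eta^*\|_2^2$, and evaluating the primal objective shows $c=\Mod(\Gamma)$, which is part~(b). Part~(a) then follows by multiplying $\|\rho^*\|_2^2=\Mod(\Gamma)$ by $\|\eta^*\|_2^2=1/\Mod(\Gamma)$.

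The main obstacle is the strong-duality half of~(a): one must actually produce a $\mu^*\in\mathcal{P}(\Gamma)$ that attains the lower bound, not merely observe that a minimizer of~\eqref{eq:FEU} exists. The cleanest route is to write~\eqref{eq:mod-def} as a QP with linear inequality constraints, write down the KKT conditions, and identify the Lagrange multipliers on the spanning-tree length constraints as an (unnormalized) finite measure on $\Gamma$; complementary slackness gives $\ell_{\rho^*}(\gamma)=1$ wherever this measure is positive, so that its total mass equals $\sum_e\rho^*(e)\eta(e)=\Mod(\Gamma)$, and dividing by $\Mod(\Gamma)$ yields the desired pmf $\mu^*$ with $\eta^*=\rho^*/\Mod(\Gamma)$. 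Nontriviality of $G$ guarantees $\rho^*\not\equiv 0$ so that this normalization is legitimate.
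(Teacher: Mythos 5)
Your proposal is essentially correct, but note that the paper itself gives no proof of Theorem~\ref{thm:mod-summary}: it is stated as a summary collected from the cited references, so there is nothing internal to compare against step by step. Your argument is a legitimate self-contained route and matches the spirit of those references (Lagrangian/blocking duality between~\eqref{eq:mod-def} and~\eqref{eq:FEU}): items 1 and 2 are exactly the standard strict-convexity/compactness arguments, and the Cauchy--Schwarz chain $1\le\sum_e\rho(e)\eta(e)\le\|\rho\|_2\|\eta\|_2$ gives the weak-duality half of 3(a) cleanly. You correctly identify the real content as the strong-duality half, and your KKT sketch does close it: stationarity plus complementary slackness on the nonnegativity multipliers gives $\rho^*(e)=c\,\tilde\eta(e)$ on \emph{all} edges (on edges with $\rho^*(e)=0$ the multiplier forces $\tilde\eta(e)=0$ as well, a point worth writing out), complementary slackness on the tree constraints gives $\ell_{\rho^*}(\gamma)=1$ on the support of the multiplier measure, and normalizing produces the pmf $\mu^*$ with $\eta^*=\rho^*/\Mod(\Gamma)$; optimality of this $\eta$ for~\eqref{eq:FEU} then follows from the weak-duality inequality together with the uniqueness in item 2. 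Two small bookkeeping cautions: the claim that the multiplier measure has total mass $\sum_e\rho^*(e)\eta(e)=\Mod(\Gamma)$ is convention-dependent (with objective $\sum_e\rho(e)^2$ the stationarity condition carries a factor $2$, so the raw mass is $2\Mod(\Gamma)$; this cancels upon normalization and does not affect 3(a) or 3(b)); and if you instead argue proportionality from equality in Cauchy--Schwarz, note explicitly that $\eta^*\not\equiv 0$ (indeed $\sum_e\eta^*(e)=|V|-1>0$) so the proportionality constant is well defined. Also, Slater is not actually needed since the constraints are affine, though your $\rho\equiv 1+\epsilon$ point is harmless. What your approach buys is a complete proof from first principles; what the paper buys by citing is brevity, since these facts are developed at length in the referenced works.
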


\subsection{Graph vulnerability}

Finally, we review the concept of graph vulnerability introduced in~\cite{cunningham1985optimal}.  We shall adopt a notation similar but not identical to the notations of~\cite{cunningham1985optimal} and~\cite{gueye2010design}.

For any subset of edges $J\subseteq E$, define
\begin{equation}\label{eq:theta-J}
\mathcal{M}(J) := \min_{\gamma \in \Gamma } |\gamma \cap J|\quad \text{and}\quad \theta(J) := 
\begin{cases}
\frac{\mathcal{M}(J)}{|J|} & \text{if }J \ne \emptyset,\\
\phantom{00}0 & \text{if }J = \emptyset.
\end{cases}
\end{equation}
The quantity $\mathcal{M}(J)$ is the minimum possible overlap between the edge set $J$ and any spanning tree of $G$. The value $\theta(J)$ is called the \emph{vulnerability} of $J$.  The \emph{vulnerability} of the graph $G$, $\theta(G)$, is the maximum vulnerability of subsets of its edges.  That is
\begin{equation}\label{eq:optimization problem}
\theta(G) := \max_{J\subseteq E}\theta(J).
\end{equation}
A set $J$ is said to be \emph{critical} if
\begin{equation}\label{eq:critical-set}
\theta(J) = \theta(G).
\end{equation}
Since $J=E$ is one possible choice, there is a simple lower bound for any nontrivial graph:
\begin{equation*}
\theta(G) \ge \frac{|V|-1}{|E|} > 0.
\end{equation*}
Thus, the empty set is never critical for a nontrivial graph.  There can be more than one critical set.  A key result of~\cite{cunningham1985optimal} is the existence of a polynomial-time algorithm for computing $\theta(G)$.  A method for additionally finding a critical set is presented in Section~\ref{sec:obtain-crit-set}.

\subsection{Contributions of this work}

The primary contribution of this work is the construction of an efficient algorithm for computing spanning tree modulus and the associated edge density, $\eta^*$, in exact arithmetic.  The remainder of this paper is organized as follows.  In Section~\ref{sec:vulnerability-to-modulus}, we show that an algorithm capable of finding a critical set can be used recursively to find the optimal edge usage probabilities $\eta^*$.  In Section~\ref{sec:graph-vulnerability}, we review Cunningham's polynomial-time algorithm for graph vulnerability.  In Section~\ref{sec:cunningham-mods}, we introduce some modifications to Cunningham's algorithm.  One of these is necessary in order to compute spanning tree modulus, the other is a practical modification that allows the use of integer arithmetic in implementations of the spanning tree modulus algorithm.  Finally, in Section~\ref{sec:spt-mod-algorithm}, we detail the final algorithm for spanning tree modulus, and present some computational examples.

\section{Using vulnerability to compute modulus}\label{sec:vulnerability-to-modulus}

In this section, we develop in more detail the connection between graph vulnerability and spanning tree modulus.  Most of the following results are consequences of the theorems presented in~\cite{achpcst:disc-math2021}, but are not stated exactly as we need them here.  In the interest of keeping the present work self-contained, we present direct proofs of the results we need here.  In what follows, $\mu^*$ represents any optimal pmf for spanning tree modulus and $\eta^*$ represents the corresponding edge usage probabilities.  If $J\subseteq E$ is a subset of edges, we denote by $G_J=(V,J)$ the subgraph of $G$ induced by the edges in $J$ and we let $Q(G_J)$ be the number of connected components of $G_J$.

The following two subgraph decompositions play an important role in the following discussion.  Suppose $J\subseteq E$.  When the edges of $J$ are removed from $G$, we are left with the graph $G_{\overline{J}}$, where $\overline{J}=E\setminus J$ is the complementary set of edges to $J$.  This graph has a number $k=Q(G_{\overline{J}})$ of connected components, call them $G_i'=(V_i,E_i')$ for $i=1,2,\ldots,k$.  The sets $\{V_1,V_2,\ldots,V_k\}$ form a partition of the vertex set $V$ and the sets $\{J,E_1',E_2',\ldots,E_k'\}$ form a partition of the edge set $E$.  The set $J$ also induces a related family of subgraphs $G_i=(V_i,E_i)$.  Each $G_i$ has the same vertex set as $G_i'$ and has edge set
\begin{equation*}
E_i = \{e\in E : e\text{ is incident upon two vertices in }V_i\}.
\end{equation*}
In other words, each $G_i$ is the vertex-induced subgraph that is induced by $V_i$.  Notice that $E_i'\subseteq E_i$ and that $E_i\setminus E_i'\subseteq J$.  The following lemma collects some well-known facts about spanning trees.

\begin{lemma}\label{lem:M-Q}
Let $J\subseteq E$.  Then
\begin{equation*}
\mathcal{M}(J) = Q(G_{\overline{J}})-1.
\end{equation*}
Moreover, if $\{G_i'\}$ is the associated collection of subgraphs induced by $J$ as described above and if $\gamma$ is any spanning tree of $G$, then $|\gamma\cap J| = \mathcal{M}(J)$ if and only if $\gamma\cap E_i'$ is a spanning tree of $G_i'$ for each $i$.
\end{lemma}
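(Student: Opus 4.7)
The plan is to establish the identity $\mathcal{M}(J) = Q(G_{\overline{J}}) - 1$ by proving both inequalities separately, and to extract the characterization of equality directly from the lower-bound argument. Throughout I would set $k := Q(G_{\overline{J}})$ and keep the notation $G_i' = (V_i, E_i')$ from the paragraph preceding the lemma.

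For the lower bound $\mathcal{M}(J) \ge k - 1$, I would fix an arbitrary spanning tree $\gamma$ and look at $\gamma \cap \overline{J}$. This is a subforest of $\gamma$, and since no edge of $\overline{J}$ can have endpoints in two distinct components of $G_{\overline{J}}$, the forest decomposes as the disjoint union of its intersections with the $E_i'$. Each $\gamma \cap E_i'$ is a forest in $G_i'$, so $|\gamma \cap E_i'| \le |V_i| - 1$. Summing yields $|\gamma \cap \overline{J}| \le |V| - k$ and hence $|\gamma \cap J| = (|V|-1) - |\gamma \cap \overline{J}| \ge k - 1$. Taking the minimum over $\gamma$ gives $\mathcal{M}(J) \ge k - 1$, and since equality in $|\gamma \cap J| = k - 1$ forces equality in each $|\gamma \cap E_i'| \le |V_i| - 1$, I simultaneously get that $\gamma$ attains the minimum if and only if $\gamma \cap E_i'$ is a spanning tree of $G_i'$ for every $i$ (the converse direction being immediate by reversing the edge count).

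For the reverse inequality $\mathcal{M}(J) \le k - 1$, I would explicitly build a spanning tree achieving the bound. Choose a spanning tree $T_i$ of each (connected) component $G_i'$ and set $F := T_1 \cup \cdots \cup T_k$; this is a spanning forest of $G$ whose components are exactly $V_1, \ldots, V_k$. Now contract each $V_i$ to a single vertex. Any edge of $G$ with endpoints in distinct $V_i$'s must lie in $J$, since otherwise it would be an edge of $\overline{J}$ joining two components of $G_{\overline{J}}$. Because $G$ is connected, so is the contracted multigraph on $k$ vertices, and hence it admits a spanning tree consisting of $k - 1$ non-loop edges, which correspond to a set $B \subseteq J$ in the original graph. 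Then $F \cup B$ has $(|V| - k) + (k - 1) = |V| - 1$ edges and is connected, so it is a spanning tree of $G$ with $|(F \cup B) \cap J| = k - 1$, completing the equality.

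The only real check is the bookkeeping in the contraction step, namely that cross-component edges necessarily lie in $J$ and that connectedness of $G$ transfers to the contraction; both are immediate from the definition of a connected component of $G_{\overline{J}}$. Everything else is an edge-counting argument, so I expect no serious obstacle.
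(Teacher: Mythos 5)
Your proof is correct and takes essentially the same route as the paper: the same edge-count decomposition $|\gamma\cap J| = (|V|-1) - \sum_i |\gamma\cap E_i'|$ yields the lower bound $Q(G_{\overline{J}})-1$ together with the equality characterization. The only cosmetic difference is the existence step, where you build a tree attaining the bound explicitly by contracting the sets $V_i$ and lifting a spanning tree of the quotient multigraph, while the paper simply extends the acyclic union $\bigcup_i\gamma_i$ of component spanning trees to a spanning tree of $G$ and counts.
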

\begin{proof}
One way to see this is by a counting argument.  For any spanning tree $\gamma$ and any subgraph $G_i'=(V_i,E_i')$, $\gamma\cap E_i'$ is acyclic and, therefore, $|\gamma\cap E_i'|\le |V_i|-1$, with equality holding if and only if $\gamma\cap E_i'$ is a spanning tree of $G_i'$.

This implies that
\begin{equation*}
|\gamma\cap J| = |\gamma| - \sum_{i=1}^k|\gamma\cap E_i'|
\ge (|V|-1) - (|V|-k) = Q(G_{\overline{J}})-1.
\end{equation*}
This lower bound can only be attained if $|\gamma\cap E_i'|=|V_i|-1$ for each $i$.  To see that this is possible, let $\gamma_i$ be a spanning tree for $G_i'$ with $i=1,2,\ldots,k$.  Since $\cup_i\gamma_i$ is an acyclic subset of $E$, there must be a spanning tree, $\gamma$ of $G$ that contains this union.  But then 
\begin{equation*}
    |V|-1 = |\gamma| = |\gamma\cap J| + \sum_{i=1}^k|\gamma\cap E_i'| = |\gamma\cap J| + |V| - k,
\end{equation*}
implying that
\begin{equation*}
    \mathcal{M}(J) = |\gamma\cap J| = k-1.
\end{equation*}
\end{proof}

The following lemma establishes a necessary condition for optimality in~\eqref{eq:FEU}.

\begin{lemma}\label{lem:supp-mu-min-eta}
If $\gamma\in\supp\mu^*$, then
\begin{equation*}
    \ell_{\eta^*}(\gamma) = \min_{\gamma'\in\Gamma}\ell_{\eta^*}(\gamma').
\end{equation*}
That is, an optimal $\mu^*$ is supported only on trees with minimum $\eta^*$-length.
\end{lemma}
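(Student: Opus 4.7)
The plan is a standard first-order optimality argument for the convex objective in~\eqref{eq:FEU}. The idea is that whenever $\gamma\in\supp\mu^*$, we have freedom to shift a small amount of probability mass off of $\gamma$ and onto any other tree $\gamma'\in\Gamma$ without leaving the simplex $\mathcal{P}(\Gamma)$. Optimality of $\mu^*$ forces the one-sided directional derivative of the objective along such a perturbation to be non-negative, and that condition will read precisely as $\ell_{\eta^*}(\gamma')\ge\ell_{\eta^*}(\gamma)$.

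Concretely, I would fix $\gamma\in\supp\mu^*$ and an arbitrary $\gamma'\in\Gamma$, and then for $t\in[0,\mu^*(\gamma)]$ define the perturbed pmf
\begin{equation*}
\mu_t := \mu^* + t(\mathbbm{1}_{\{\gamma'\}} - \mathbbm{1}_{\{\gamma\}}).
\end{equation*}
Since $\mu^*(\gamma)>0$, this is a valid pmf on $\Gamma$ for all sufficiently small $t\ge 0$. The induced edge usage probabilities are
\begin{equation*}
\eta_t(e) = \eta^*(e) + t\bigl(\mathbbm{1}_{e\in\gamma'} - \mathbbm{1}_{e\in\gamma}\bigr),
\end{equation*}
directly from the definition $\eta(e)=\sum_\gamma \mathbbm{1}_{e\in\gamma}\mu(\gamma)$.

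Next I would plug this into the FEU objective $F(\mu):=\sum_{e\in E}\eta(e)^2$ and differentiate at $t=0$. Expanding the square,
\begin{equation*}
\tfrac{d}{dt}F(\mu_t)\Big|_{t=0}
= 2\sum_{e\in E}\eta^*(e)\bigl(\mathbbm{1}_{e\in\gamma'}-\mathbbm{1}_{e\in\gamma}\bigr)
= 2\bigl(\ell_{\eta^*}(\gamma')-\ell_{\eta^*}(\gamma)\bigr).
\end{equation*}
Because $\mu^*$ minimizes $F$ over $\mathcal{P}(\Gamma)$ and the curve $t\mapsto\mu_t$ stays in $\mathcal{P}(\Gamma)$ for $t\ge 0$ small, this one-sided derivative must be non-negative. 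Hence $\ell_{\eta^*}(\gamma')\ge\ell_{\eta^*}(\gamma)$ for every $\gamma'\in\Gamma$, which gives exactly the claim.

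There is no real obstacle here; the only point needing a bit of care is that the perturbation $\mu_t$ is only feasible for $t\ge 0$ (not $t<0$, since $\mu^*(\gamma')$ could be $0$), so one has to invoke the one-sided optimality condition rather than setting a two-sided derivative to zero. Nothing beyond the definitions in~\eqref{eq:FEU} and the convexity/linearity structure of $\mu\mapsto\eta$ is required.
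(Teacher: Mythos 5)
Your argument is correct and is essentially the paper's proof: both perturb the optimal pmf within $\mathcal{P}(\Gamma)$ and use first-order optimality of the quadratic FEU objective, with everything following from the linearity of $\mu\mapsto\eta$. The only difference is the perturbation direction --- you transfer mass from a support tree $\gamma$ to an arbitrary $\gamma'$ and read off $\ell_{\eta^*}(\gamma')\ge\ell_{\eta^*}(\gamma)$ directly, whereas the paper mixes $\mu^*$ with $\delta_{\gamma''}$ for a minimum-length tree $\gamma''$ and reaches a contradiction via the identity $\sum_{e\in E}\eta^*(e)^2=\sum_{\gamma\in\Gamma}\mu^*(\gamma)\ell_{\eta^*}(\gamma)$; this is a cosmetic variation, not a different method.
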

\begin{proof}
Suppose, to the contrary, that there exists a spanning tree $\gamma'\in\supp\mu^*$ that does not have minimum $\eta^*$-length and let $\gamma''$ be any minimum $\eta^*$-length spanning tree.  For $\epsilon\in[0,1]$, consider the pmf
\begin{equation*}
\mu_\epsilon = (1-\epsilon)\mu^* + \epsilon\delta_{\gamma''}.
\end{equation*}
The corresponding edge usage probability for this pmf is
\begin{equation*}
\eta_\epsilon(e) = (1-\epsilon)\eta^*(e) + \epsilon\mathbbm{1}_{e\in\gamma''}.
\end{equation*}
Define
\begin{equation*}
\begin{split}
g(\epsilon) &:= \sum_{e\in E}\eta_\epsilon(e)^2 =
\sum_{e\in E}\left(
(1-\epsilon)\eta^*(e) + \epsilon\mathbbm{1}_{e\in\gamma''}
\right)^2\\
&= (1-\epsilon)^2g(0) + 
2\epsilon(1-\epsilon)\sum_{e\in\gamma''}\eta^*(e)
+ \epsilon^2(|V|-1) \\
&= g(0) +
2\epsilon\left(
\ell_{\eta^*}(\gamma'') - g(0)
\right)
+ O(\epsilon^2).
\end{split}
\end{equation*}

But
\begin{equation*}
\begin{split}
g(0) &= \sum_{e\in E}\eta^*(e)^2 =
\sum_{e\in E}\eta^*(e)\sum_{\gamma\in\Gamma}\mathbbm{1}_{e\in\gamma}\mu^*(\gamma)\\
&= \sum_{\gamma\in\Gamma}\mu^*(\gamma)
\sum_{e\in E}\eta^*(e)\mathbbm{1}_{e\in\gamma}
= \sum_{\gamma\in\Gamma}\mu^*(\gamma)\ell_{\eta^*}(\gamma)
> \ell_{\eta^*}(\gamma''),
\end{split}
\end{equation*}
where the strict inequality arises from the facts that $\gamma'\in\supp\mu^*$ and $\ell_{\eta^*}(\gamma')>\ell_{\eta^*}(\gamma'')$.  Thus, for sufficiently small $\epsilon>0$, 
\begin{equation*}
\sum_{e\in E}\eta_\epsilon(e)^2 = g(\epsilon) < g(0)
= \sum_{e\in E}\eta^*(e)^2,
\end{equation*}
which contradicts the optimality of $\eta^*$.
\end{proof}

The set of edges where $\eta^*$ attains its maximum plays an important role in our results.  To this end, define
\begin{equation*}
    E^* := \{e\in E:\eta^*(e)=\eta^*_{\max}\},\quad\text{where}
    \quad\eta^*_{\max} := \max_{e\in E}\eta^*(e).
\end{equation*}

\begin{lemma}\label{lem:Estar-min-isect}
If $\gamma\in\supp\mu^*$, then
\begin{equation*}
|\gamma\cap E^*| = \mathcal{M}(E^*).
\end{equation*}
\end{lemma}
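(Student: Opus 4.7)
The lower bound $|\gamma \cap E^*| \ge \mathcal{M}(E^*)$ follows at once from the definition of $\mathcal{M}$, so only the matching upper bound requires work. I will argue by contradiction: assume $|\gamma \cap E^*| > \mathcal{M}(E^*)$ and produce a spanning tree $\gamma'$ with $\ell_{\eta^*}(\gamma') < \ell_{\eta^*}(\gamma)$, contradicting Lemma~\ref{lem:supp-mu-min-eta}. The tool will be an edge swap on $\gamma$ against a ``target'' tree that achieves the minimum intersection: by Lemma~\ref{lem:M-Q} there is a spanning tree $\gamma_0$ with $|\gamma_0 \cap E^*| = \mathcal{M}(E^*)$.

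A simple count gives $|(\gamma \setminus \gamma_0) \cap E^*| \ge |\gamma \cap E^*| - |\gamma_0 \cap E^*| > 0$, so I can pick $e \in (\gamma \setminus \gamma_0) \cap E^*$. Removing $e$ disconnects $\gamma$ into two components, and because $\gamma_0$ is connected some $f \in \gamma_0$ crosses the resulting cut; that $f$ must lie in $\gamma_0 \setminus \gamma$ (the only edge of $\gamma$ in the cut is $e$, and $e \notin \gamma_0$), and $\gamma - e + f$ is again a spanning tree. If $f \notin E^*$, then $\eta^*(f) < \eta^*_{\max} = \eta^*(e)$, so $\gamma' := \gamma - e + f$ satisfies $\ell_{\eta^*}(\gamma') < \ell_{\eta^*}(\gamma)$, and we are done.

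The main obstacle is that this single swap is only guaranteed to produce some $f \in \gamma_0 \setminus \gamma$, which might land in $E^*$ and yield a weight-neutral exchange. I would resolve this by iterating: when the selected $f$ lies in $E^*$, replace $\gamma$ by $\gamma - e + f$ (which has the same $\ell_{\eta^*}$-length and the same intersection with $E^*$, but smaller symmetric difference with $\gamma_0$) and repeat. The symmetric difference strictly decreases at each step, so the iteration cannot continue forever; it cannot terminate at $\gamma = \gamma_0$ (this would force $|\gamma \cap E^*| = \mathcal{M}(E^*)$), and an admissible $e$ remains available throughout (as long as $|\gamma \cap E^*| > \mathcal{M}(E^*)$, the same counting bound keeps $(\gamma \setminus \gamma_0) \cap E^*$ nonempty). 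Hence at some iteration the chosen $f$ must land outside $E^*$, which gives the desired strict decrease of $\ell_{\eta^*}$ and completes the contradiction.

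The bookkeeping in this iterative argument is the delicate part; a slicker alternative is to invoke Brualdi's symmetric exchange theorem, which provides a bijection $\phi : \gamma \setminus \gamma_0 \to \gamma_0 \setminus \gamma$ with each $\gamma - e + \phi(e)$ a spanning tree, after which a one-line pigeonhole on the sizes of $(\gamma \setminus \gamma_0) \cap E^*$ and $(\gamma_0 \setminus \gamma) \cap E^*$ supplies an $e \in E^*$ with $\phi(e) \notin E^*$. I would likely present the self-contained iterative version in the body and mention the Brualdi shortcut as a remark.
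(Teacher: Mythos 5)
Your proposal is correct, but it follows a genuinely different route from the paper. You fix a tree $\gamma_0$ attaining $\mathcal{M}(E^*)$ (note: this needs only the definition of $\mathcal{M}$ as a minimum over the finite set $\Gamma$, not Lemma~\ref{lem:M-Q}) and run a classical spanning-tree exchange argument: a counting bound supplies an edge $e\in(\gamma\setminus\gamma_0)\cap E^*$, the fundamental cut of $e$ yields $f\in\gamma_0\setminus\gamma$ with $\gamma-e+f\in\Gamma$, and either $f\notin E^*$, giving a strictly $\eta^*$-shorter tree and contradicting Lemma~\ref{lem:supp-mu-min-eta}, or the swap is weight-neutral and strictly shrinks the symmetric difference with $\gamma_0$ while preserving $|\gamma\cap E^*|$, so the iteration must eventually hit the first case; the Brualdi-bijection variant you sketch compresses this to a single pigeonhole step. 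The paper argues differently: it deletes all of $E^*$ from $\gamma$ and re-completes the forest Kruskal-style, preferring small $\eta^*$; if any edge of $E\setminus E^*$ could be added without creating a cycle, the resulting tree would be strictly shorter (again contradicting Lemma~\ref{lem:supp-mu-min-eta}), so $\gamma\setminus E^*$ must already span each component of $G_{\overline{E^*}}$, and then the counting identity of Lemma~\ref{lem:M-Q} gives $|\gamma\cap E^*|=Q(G_{\overline{E^*}})-1=\mathcal{M}(E^*)$. Both arguments hinge on Lemma~\ref{lem:supp-mu-min-eta}; the paper's version buys a structural fact that harmonizes with the component decomposition used throughout Section~2 (e.g., in Lemma~\ref{lem:mu-star-restricts}), while yours is self-contained at the level of elementary tree exchanges and avoids invoking the component characterization, at the cost of the iterative bookkeeping (or an appeal to Brualdi's exchange theorem) that you correctly identify as the delicate part.
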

\begin{proof}
To see why this must be true, let $\gamma\in\supp\mu^*$ and consider constructing another spanning tree $\gamma'\in\Gamma$ as follows.  First, remove all edges of $E^*$ from $\gamma$, leaving a forest $\gamma\setminus E^*$.  Now, proceed as one would in Kruskal's algorithm, successively adding back an edge with the smallest $\eta^*$ possible without creating a cycle.  After some number of edge additions, a spanning tree $\gamma'$ will result.  If any of the added edges were from $E\setminus E^*$, then we would have $\ell_{\eta^*}(\gamma')<\ell_{\eta^*}(\gamma)$, which contradicts Lemma~\ref{lem:supp-mu-min-eta}.  Thus, if we add any edge in $E\setminus E^*$ to $\gamma\setminus E^*$ we must create a cycle.  This implies that $\gamma\setminus E^*$ restricted to any connected component of $G_{\overline{E^*}}$ is a spanning tree.  From Lemma~\ref{lem:M-Q} it follows that $|\gamma\cap E^*|=Q(G_{\overline{E^*}})-1=\mathcal{M}(E^*)$.
\end{proof}

\begin{lemma}\label{lem:eta-star-max}
The largest value $\eta^*$ attains is
\begin{equation*}
\eta^*_{\max} = \theta(E^*).
\end{equation*}
\end{lemma}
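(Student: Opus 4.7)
The plan is to compute the quantity $\sum_{e\in E^*}\eta^*(e)$ in two different ways and equate the results.

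First, I would note that $E^*$ is nonempty. Since $G$ is nontrivial and connected, Theorem~\ref{thm:mod-summary} gives $\eta^*(e)=\rho^*(e)/\Mod(\Gamma)$, and because $\rho^*$ must give unit length to every spanning tree, $\rho^*$ cannot be identically zero. Thus $\eta^*_{\max}>0$ and $E^*\neq\emptyset$, so $\theta(E^*)=\mathcal{M}(E^*)/|E^*|$.

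Next, by the definition of $\eta^*$ and by interchanging the order of summation,
\begin{equation*}
\sum_{e\in E^*}\eta^*(e)
= \sum_{e\in E^*}\sum_{\gamma\in\Gamma}\mathbbm{1}_{e\in\gamma}\mu^*(\gamma)
= \sum_{\gamma\in\Gamma}\mu^*(\gamma)|\gamma\cap E^*|.
\end{equation*}
Restricting to $\gamma\in\supp\mu^*$ and applying Lemma~\ref{lem:Estar-min-isect}, each term $|\gamma\cap E^*|$ equals $\mathcal{M}(E^*)$, and since $\mu^*$ is a pmf, the sum collapses to $\mathcal{M}(E^*)$.

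On the other hand, every edge in $E^*$ satisfies $\eta^*(e)=\eta^*_{\max}$ by definition, so the same sum equals $|E^*|\cdot\eta^*_{\max}$. Equating the two expressions and dividing by $|E^*|$ yields
\begin{equation*}
\eta^*_{\max} = \frac{\mathcal{M}(E^*)}{|E^*|} = \theta(E^*),
\end{equation*}
as claimed. I do not expect any real obstacle here: the only nontrivial ingredient is Lemma~\ref{lem:Estar-min-isect}, which has already been established. The step that most warrants care is confirming nontriviality of $E^*$ so that the definition $\theta(E^*)=\mathcal{M}(E^*)/|E^*|$ (rather than the $J=\emptyset$ branch of~\eqref{eq:theta-J}) applies.
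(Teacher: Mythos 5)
Your proof is correct and follows essentially the same double-counting argument as the paper: summing $\eta^*$ over $E^*$, exchanging the order of summation, and invoking Lemma~\ref{lem:Estar-min-isect} to collapse the sum to $\mathcal{M}(E^*)$. Your extra check that $E^*\neq\emptyset$ (so the nonzero branch of~\eqref{eq:theta-J} applies) is a small point the paper leaves implicit, but it does not change the route.
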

\begin{proof}
This follows from the fact that
\begin{equation*}
\begin{split}
|E^*|\eta^*_{\max} &= \sum_{e\in E^*}\eta^*(e) =
\sum_{e\in E^*}\sum_{\gamma\in\Gamma}\mathbbm{1}_{e\in\gamma}\mu^*(\gamma)\\
&= \sum_{\gamma\in\Gamma}\mu^*(\gamma)\sum_{e\in E^*}\mathbbm{1}_{e\in\gamma} =
\sum_{\gamma\in\supp\mu^*}\mu^*(\gamma)|\gamma\cap E^*| = \mathcal{M}(E^*),
\end{split}
\end{equation*}
where the final equality follows from Lemma~\ref{lem:Estar-min-isect}.
\end{proof}

Now we are ready to prove the first theorem that will allow us to use graph vulnerability to compute spanning tree modulus.

\begin{theorem}\label{thm:crit-set-eta-star}
Let $J\subseteq E$ be a critical set for $G$.  Then, for all $e\in J$,
\begin{equation*}
\eta^*(e) = \theta(J) = \theta(G)
= \theta(E^*) = \eta^*_{\max}.
\end{equation*}
\end{theorem}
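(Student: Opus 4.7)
The plan is to combine two observations: an upper bound $\eta^*_{\max} = \theta(E^*) \le \theta(G) = \theta(J)$ coming from the definitions, and a matching lower bound on the average of $\eta^*$ over $J$ coming from a double-counting argument against $\mathcal{M}(J)$. Squeezing between these two inequalities will force equality everywhere.

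First I would record the easy chain of inequalities. By Lemma~\ref{lem:eta-star-max}, $\eta^*_{\max} = \theta(E^*)$. Since $\theta(G)$ is the maximum of $\theta$ over all subsets of $E$, we have $\theta(E^*)\le\theta(G)$, and by criticality of $J$, $\theta(G)=\theta(J)$. So
\begin{equation*}
\eta^*_{\max} = \theta(E^*) \le \theta(G) = \theta(J).
\end{equation*}

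Next I would bound $\sum_{e\in J}\eta^*(e)$ from below. Swapping the order of summation and using $|\gamma\cap J|\ge \mathcal{M}(J)$ for every spanning tree $\gamma$,
\begin{equation*}
\sum_{e\in J}\eta^*(e) = \sum_{e\in J}\sum_{\gamma\in\Gamma}\mathbbm{1}_{e\in\gamma}\mu^*(\gamma) = \sum_{\gamma\in\Gamma}\mu^*(\gamma)|\gamma\cap J| \ge \mathcal{M}(J),
\end{equation*}
since $\mu^*$ is a pmf. Dividing by $|J|$ (nonempty since $J$ is critical for a nontrivial graph), the \emph{average} of $\eta^*$ over $J$ is at least $\theta(J)$.

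Finally I would close the squeeze. Every $e\in J$ satisfies $\eta^*(e)\le \eta^*_{\max}\le \theta(J)$, while the average of these values is at least $\theta(J)$. This is only possible if $\eta^*(e)=\theta(J)$ for every $e\in J$, and in turn forces equality throughout the chain above, giving $\eta^*(e)=\theta(J)=\theta(G)=\theta(E^*)=\eta^*_{\max}$, as required.

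The only step that requires care is the double-counting identity, which hinges on $\eta^*$ actually being realized by some optimal pmf $\mu^*$ (guaranteed by Theorem~\ref{thm:mod-summary}); beyond that the argument is a short two-sided estimate, so I do not expect a substantive obstacle.
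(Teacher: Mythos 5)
Your proposal is correct and follows essentially the same route as the paper's proof: the identity $\sum_{e\in J}\eta^*(e)=\sum_{\gamma\in\Gamma}\mu^*(\gamma)\,|\gamma\cap J|\ge\mathcal{M}(J)$ combined with Lemma~\ref{lem:eta-star-max} and the criticality of $J$ to squeeze the chain $\theta(G)\ge\theta(E^*)=\eta^*_{\max}\ge\frac{1}{|J|}\sum_{e\in J}\eta^*(e)\ge\theta(J)=\theta(G)$ into equalities. The only difference is cosmetic ordering of the inequalities; the key ingredients and the final averaging step are identical.
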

\begin{proof}
Note that, similar to the proof of the previous theorem,
\begin{equation*}
|J|\eta^*_{\max} \ge \sum_{e\in J}\eta^*(e) = 
\sum_{\gamma\in\Gamma}\mu^*(\gamma)|\gamma\cap J|
\ge \mathcal{M}(J).
\end{equation*}
Thus, by Lemma~\ref{lem:eta-star-max},
\begin{equation*}
\theta(G) \ge \theta(E^*) =
\eta^*_{\max}  \ge
\frac{1}{|J|}\sum_{e\in J}\eta^*(e)
\ge \theta(J) = \theta(G),
\end{equation*}
and the inequalities are actually satisfied as equalities.  Since the average of $\eta^*$ over $J$ equals to $\eta^*_{\max}$, it follows that $\eta^*$ attains this maximum on all edges of $J$.
\end{proof}

Theorem~\ref{thm:crit-set-eta-star} provides the first step in an algorithm for computing the spanning tree modulus.  If we have an algorithm, such as the one provided by Cunningham, that finds both $\theta(G)$ and a critical set $J$, then we immediately know that $\eta^*$ takes the value $\theta(G)$ on the edges of $J$.  The theorem implies that $J\subseteq E^*$, but the inclusion could be strict.  In any case, this step provides the value of $\eta^*$ on at least one edge.  Next, we show how this procedure can be repeated recursively to find $\eta^*$ on the remaining edges.  To see this, we need to establish a few facts about a critical set $J$ and about the components of $G_{\overline{J}}$.

As described at the beginning of this section, the removal of $J$ from $G$ induces a set of subgraphs $\{G_i=(V_i,E_i)\}$ of $G$.  By construction, $E\setminus\left(\cup_iE_i\right)\subseteq J$.  However, there is, a priori, no reason to suppose that $(\cup_iE_i)\cap J$ is empty.  This is addressed in the following lemma.

\begin{lemma}\label{lem:J-crit-nonisect}
Let $J$ be a critical set for $G$ and let $G_i=(V_i,E_i)$ for $i=1,2,\ldots,k$ be the associated $k=Q(G_{\overline{J}})$ subgraphs constructed as above.  Then
\begin{equation*}
    \left(\bigcup_{i=1}^kE_i\right)\cap J = \emptyset.
\end{equation*}
\end{lemma}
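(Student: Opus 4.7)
The plan is to argue by contradiction: assume some edge $e\in J$ has both endpoints in a single component $V_i$ of $G_{\overline{J}}$, and show that removing $e$ from $J$ strictly increases the vulnerability ratio, violating criticality.

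First I would set $J' := J\setminus\{e\}$, so that $|J'|=|J|-1$ and $\overline{J'}=\overline{J}\cup\{e\}$. The key geometric observation is that since both endpoints of $e$ already lie in the same connected component $V_i$ of $G_{\overline{J}}$, adjoining $e$ to $\overline{J}$ cannot merge components, so
\begin{equation*}
Q(G_{\overline{J'}}) = Q(G_{\overline{J}}).
\end{equation*}

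Next, I invoke Lemma~\ref{lem:M-Q} twice, once for $J$ and once for $J'$, to convert the component count into $\mathcal{M}$ values. This immediately gives $\mathcal{M}(J')=\mathcal{M}(J)$. Combining with $|J'|<|J|$ and the fact that $\mathcal{M}(J)\ge 1$ (a consequence of $\theta(G)>0$ for any nontrivial connected graph, as noted in the introduction), I obtain
\begin{equation*}
\theta(J') \;=\; \frac{\mathcal{M}(J')}{|J'|} \;=\; \frac{\mathcal{M}(J)}{|J|-1} \;>\; \frac{\mathcal{M}(J)}{|J|} \;=\; \theta(J) \;=\; \theta(G),
\end{equation*}
which contradicts the definition of $\theta(G)$ as the maximum of $\theta$ over all edge subsets.

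I don't anticipate a real obstacle here; the argument is just a clean application of Lemma~\ref{lem:M-Q} combined with the fact that adding a chord to a component does not change the number of components. The only minor subtlety is ensuring $\mathcal{M}(J)>0$ so that the strict inequality $\mathcal{M}(J)/(|J|-1) > \mathcal{M}(J)/|J|$ is genuine, but this is guaranteed by criticality of $J$ in a nontrivial graph.
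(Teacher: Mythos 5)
Your proposal is correct and follows essentially the same route as the paper: delete the offending edge(s) of $J$ lying inside some $E_i$, observe that $Q(G_{\overline{J}})$ is unchanged, apply Lemma~\ref{lem:M-Q}, and contradict criticality via the strict increase of the ratio $\mathcal{M}/|\cdot|$. The only cosmetic difference is that you remove a single edge while the paper removes all of $J\cap\bigl(\bigcup_i E_i\bigr)$ at once, and your explicit remark that $\mathcal{M}(J)\ge 1$ (needed for the strict inequality) is a point the paper leaves implicit.
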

\begin{proof}
Suppose the intersection is nonempty, and let
\begin{equation*}
J' = J\setminus\left(\bigcup_{i=1}^kE_i\right).
\end{equation*}
By assumption, $Q(G_{\overline{J}})=Q(G_{\overline{J'}})$ and $|J'|<|J|$.  Thus, by Lemma~\ref{lem:M-Q},
\begin{equation*}
\theta(J') = \frac{\mathcal{M}(J')}{|J'|}
> \frac{\mathcal{M}(J)}{|J|} = \theta(G),
\end{equation*}
which is a contradiction.
\end{proof}

Our next step is to show that any optimal pmf $\mu^*$ on $G$ induces marginal pmfs on the families $\Gamma_i$ of spanning trees on the subgraphs $G_i$.

\begin{lemma}\label{lem:mu-star-restricts}
Let $\mu^*$ be an optimal pmf for spanning tree modulus on $G$, let $J$ be a critical set, and let $G_i=(V_i,E_i)$ be one of the subgraphs of $G$ induced by $J$.  Define
\begin{equation*}
\mu_i(\gamma') := 
\mu^*\left(
\{\gamma\in\Gamma : \gamma\cap E_i = \gamma'\}
\right)
\quad
\text{for all }\gamma'\in\Gamma_i.    
\end{equation*}
Then $\mu_i\in\mathcal{P}(\Gamma_i)$.
\end{lemma}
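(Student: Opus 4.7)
The goal is to verify the two defining properties of a pmf on $\Gamma_i$: non-negativity, and that the total mass is $1$. Non-negativity is immediate, since $\mu_i(\gamma')$ is defined as $\mu^*$ evaluated on a set. The content of the lemma is that the sets $\{\gamma\in\Gamma:\gamma\cap E_i=\gamma'\}$, as $\gamma'$ ranges over $\Gamma_i$, together carry all of the mass of $\mu^*$. Since these sets are manifestly disjoint for distinct $\gamma'$, it suffices to prove that for every $\gamma\in\supp\mu^*$, the restriction $\gamma\cap E_i$ belongs to $\Gamma_i$, i.e.\ is a spanning tree of $G_i$.

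The plan is to combine the criticality of $J$ with Lemma~\ref{lem:M-Q}. First, I would compute
\begin{equation*}
\sum_{\gamma\in\Gamma}\mu^*(\gamma)|\gamma\cap J|
=\sum_{e\in J}\eta^*(e)
=|J|\,\theta(G)=\mathcal{M}(J),
\end{equation*}
where the middle equality uses Theorem~\ref{thm:crit-set-eta-star} (so $\eta^*(e)=\theta(G)$ for $e\in J$) and the last equality uses that $J$ is critical, so $\theta(J)=\mathcal{M}(J)/|J|=\theta(G)$. Since Lemma~\ref{lem:M-Q} gives $|\gamma\cap J|\ge\mathcal{M}(J)$ pointwise and the average against $\mu^*$ equals $\mathcal{M}(J)$, equality must hold for every $\gamma\in\supp\mu^*$.

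Next, by Lemma~\ref{lem:M-Q}, the identity $|\gamma\cap J|=\mathcal{M}(J)$ is equivalent to $\gamma\cap E_i'$ being a spanning tree of $G_i'$ for each $i$. At this point I would invoke Lemma~\ref{lem:J-crit-nonisect}, which gives $E_i\cap J=\emptyset$ and hence $E_i\subseteq\overline{J}$; combined with the inclusion $E_i'\subseteq E_i$ built into the construction, this forces $E_i'=E_i$ and $G_i'=G_i$. Therefore $\gamma\cap E_i\in\Gamma_i$ for every $\gamma\in\supp\mu^*$.

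Finally, summing the definition of $\mu_i$ over $\gamma'\in\Gamma_i$ partitions $\supp\mu^*$, so
\begin{equation*}
\sum_{\gamma'\in\Gamma_i}\mu_i(\gamma')
=\sum_{\gamma\in\supp\mu^*}\mu^*(\gamma)=1,
\end{equation*}
establishing that $\mu_i\in\mathcal{P}(\Gamma_i)$. The main obstacle is the second paragraph, specifically realizing that the two averaging identities (summing $\eta^*$ over $J$ and summing $|\gamma\cap J|$ against $\mu^*$) must be combined with the extremality from Lemma~\ref{lem:M-Q} to pin down $|\gamma\cap J|$ on the support; after that, Lemma~\ref{lem:J-crit-nonisect} is needed to replace $E_i'$ by $E_i$, which is precisely what lets us conclude that restrictions of $\mu^*$-a.e.\ spanning trees land in $\Gamma_i$.
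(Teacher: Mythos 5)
Your proof is correct, but it takes a genuinely different route from the paper's. You first establish explicitly that $|\gamma\cap J| = \mathcal{M}(J)$ for every $\gamma\in\supp\mu^*$, via the averaging identity $\sum_\gamma \mu^*(\gamma)|\gamma\cap J| = \sum_{e\in J}\eta^*(e) = |J|\theta(G) = \mathcal{M}(J)$ combined with the pointwise bound $|\gamma\cap J|\ge\mathcal{M}(J)$; you then invoke the ``only if'' direction of Lemma~\ref{lem:M-Q} to deduce that $\gamma\cap E_i'$ is a spanning tree of $G_i'$, and finish by using Lemma~\ref{lem:J-crit-nonisect} to conclude $E_i'=E_i$. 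The paper instead argues by contradiction with an edge-swap: assuming $\gamma\cap E_i$ is not a spanning tree of $G_i$, it picks an edge $e\in E_i$ (noting $e\notin J$ by Lemma~\ref{lem:J-crit-nonisect}) whose addition to $\gamma$ creates a cycle crossing $J$, and removing that $J$-edge produces a tree $\gamma''$ with $|\gamma''\cap J|<\mathcal{M}(J)$, which is impossible. In effect, the paper re-derives the relevant half of Lemma~\ref{lem:M-Q} in situ, and it uses $|\gamma\cap J|=\mathcal{M}(J)$ on $\supp\mu^*$ without explicit re-derivation (it is implicit in the equality chain in the proof of Theorem~\ref{thm:crit-set-eta-star}). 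Your version is arguably cleaner: it reuses already-established machinery instead of redoing a constructive argument, and it surfaces the support-level equality $|\gamma\cap J|=\mathcal{M}(J)$ that the paper leaves implicit. The paper's version is more self-contained and elementary, needing nothing beyond the exchange property of spanning trees. Both are sound.
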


\begin{proof}
By definition, $\mu_i$ is a non-negative function on $\Gamma_i$.  To see that it is a pmf, then, it remains to verify that $\mu_i(\Gamma_i)=1$.  Let $\mathbbm{1}_{\gamma\cap E_i=\gamma'}$ be the indicator variable
\begin{equation*}
\mathbbm{1}_{\gamma\cap E_i=\gamma'} :=
\begin{cases}
1 & \text{if }\gamma\cap E_i=\gamma',\\
0 & \text{if }\gamma\cap E_i\ne\gamma',
\end{cases}
\end{equation*}
and note that
\begin{equation*}
\sum_{\gamma'\in\Gamma_i}\mu_i(\gamma')
= \sum_{\gamma'\in\Gamma_i}\sum_{\gamma\in\Gamma}
\mathbbm{1}_{\gamma\cap E_i=\gamma'}\mu^*(\gamma) =
\sum_{\gamma\in\Gamma}\mu^*(\gamma)
\left(
\sum_{\gamma'\in\Gamma_i}
\mathbbm{1}_{\gamma\cap E_i=\gamma'}
\right).
\end{equation*}
Since $\gamma\cap E_i$ can match at most one of the trees in $\Gamma_i$, the innermost sum is at most one.  To see that it actually equals to one, we need to verify that $\gamma\cap E_i\in\Gamma_i$ for any $\gamma\in\supp\mu^*$.  That is, we need to show that every tree in the support of $\mu^*$ restricts as a tree on $G_i$.

Suppose, to the contrary, that there is a spanning tree $\gamma\in\supp\mu^*$ such that $\gamma\cap E_i$ does not form a spanning tree of $G_i$.  Then, since the set $\gamma\cap E_i$ is acyclic, either it doesn't span all of $G_i$ or it is a forest with more than one tree. In either case, there must exist an edge $e\in E_i$ with the property that $(\gamma\cap E_i)\cup\{e\}$ does not contain a cycle.  By Lemma~\ref{lem:J-crit-nonisect}, $e\notin J$.  Consider the set $\gamma'=\gamma\cup\{e\}$.  Since $\gamma$ is a spanning tree of $G$, $\gamma'$ must contain a cycle.  Moreover, since $\gamma'\cap E_i$ does not contain a cycle, the cycle in $\gamma'$ must cross at least one edge $e'$ of $J$.  By removing this edge from $\gamma'$, we obtain a new spanning tree $\gamma''=\gamma'\setminus\{e'\}\in\Gamma$ with the property that
\begin{equation*}
|\gamma''\cap J| < |\gamma\cap J| = \mathcal{M}(J),
\end{equation*}
which is a contradiction.  This shows that $\mu_i$ is indeed a marginal pmf on $\Gamma_i$.
\end{proof}

Next we show a technique for modifying pmfs on $\Gamma$ in such a way that the edge usage probabilities change only on $E_i$.

\begin{lemma}\label{lem:glue-pmfs}
Let $\mu^*$ be an optimal pmf for spanning tree modulus on $G$, let $J$ be a critical set, let $G_i=(V_i,E_i)$ be one of the induced subgraphs, and let $\mu_i$ be any pmf on $\Gamma_i$, the family of spanning trees of $G_i$.  Let $\eta^*$ and $\eta_i$ be the corresponding edge usage probabilities of $\mu^*$ and $\mu_i$.  Then there exists a pmf $\mu\in\mathcal{P}(\Gamma)$ with edge usage probabilities $\eta$ satisfying
\begin{equation*}
\eta(e) = 
\begin{cases}
    \eta_i(e) &\text{if }e\in E_i,\\
    \eta^*(e) &\text{if }e\notin E_i.
\end{cases}    
\end{equation*}
\end{lemma}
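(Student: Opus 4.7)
The plan is to construct $\mu$ via an independent resampling of the portion of a $\mu^*$-tree that lives on $G_i$, replacing it with a tree drawn from $\mu_i$. Concretely, for every pair $(\gamma,\gamma')\in\supp\mu^*\times\Gamma_i$, define
\begin{equation*}
T(\gamma,\gamma') := (\gamma\setminus E_i)\cup\gamma',
\end{equation*}
and then set
\begin{equation*}
\mu(\tilde\gamma) := \sum_{(\gamma,\gamma')\,:\,T(\gamma,\gamma')=\tilde\gamma}\mu^*(\gamma)\mu_i(\gamma').
\end{equation*}
The required edge usage identities would then follow immediately: for $e\notin E_i$, whether $e\in T(\gamma,\gamma')$ depends only on $\gamma$, so the marginal over $\gamma'$ just returns $\eta^*(e)$; for $e\in E_i$, membership depends only on $\gamma'$, so marginalizing over $\gamma$ returns $\eta_i(e)$.

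The main obstacle, and the only nontrivial step, is verifying that $T(\gamma,\gamma')$ is always a spanning tree of $G$, so that $\mu\in\mathcal{P}(\Gamma)$. This is where the criticality of $J$ enters. First, Lemma~\ref{lem:J-crit-nonisect} gives $J\cap E_i=\emptyset$, so in the notation of Lemma~\ref{lem:M-Q} we have $E_i=E_i'$; hence $G_i$ is exactly the $i$-th connected component of $G_{\overline{J}}$. Second, by Lemma~\ref{lem:mu-star-restricts}, any $\gamma\in\supp\mu^*$ restricts to a spanning tree $\gamma\cap E_i\in\Gamma_i$ of $G_i$, and the same holds for every $j\ne i$; applying Lemma~\ref{lem:M-Q} in reverse, $|\gamma\cap J|=\mathcal{M}(J)=k-1$.

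With this setup, the plan is a straightforward edge-count and contraction argument. Because $\gamma\cap E_i$ and $\gamma'$ are both spanning trees of $G_i$, they have the same cardinality $|V_i|-1$, so $|T(\gamma,\gamma')|=|\gamma|=|V|-1$. To see that $T(\gamma,\gamma')$ is acyclic, I would contract each $V_j$ to a single vertex: the contraction of $\gamma$ consists solely of the $J$-edges in $\gamma$, forming a spanning tree on the $k$ quotient vertices, and the contraction of $T(\gamma,\gamma')$ is identical, since both $\gamma\cap E_i$ and $\gamma'$ lie entirely within $V_i$ and therefore vanish under the contraction. Any hypothetical cycle in $T(\gamma,\gamma')$ must either lie inside some $G_j$ (impossible, since $\gamma\cap E_j$ and $\gamma'$ are acyclic) or use at least one $J$-edge, in which case it would descend to a cycle in the quotient tree, a contradiction. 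An acyclic edge set on $|V|$ vertices with $|V|-1$ edges is a spanning tree, completing the key claim.

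Finally, once $T(\gamma,\gamma')\in\Gamma$ is established, summing $\mu(\tilde\gamma)$ over $\tilde\gamma\in\Gamma$ just re-sums $\mu^*(\gamma)\mu_i(\gamma')$ over the product $\supp\mu^*\times\Gamma_i$, which equals $1$, so $\mu\in\mathcal{P}(\Gamma)$. The edge usage computation is then a routine interchange of summations using the independence built into the construction, exactly as sketched in the first paragraph.
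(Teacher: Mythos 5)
Your construction is exactly the paper's: resample the $E_i$-portion of a $\mu^*$-tree independently from $\mu_i$, so the only substantive point is that $(\gamma\setminus E_i)\cup\gamma'$ is a spanning tree, and your edge-count plus acyclicity argument (using Lemmas~\ref{lem:J-crit-nonisect}, \ref{lem:mu-star-restricts}, and \ref{lem:M-Q}) is sound, as is the marginalization giving the stated $\eta$. The only difference is in the acyclicity check: you contract each $V_j$ and use $|\gamma\cap J|=\mathcal{M}(J)=k-1$ to get a quotient spanning tree, whereas the paper argues directly that a cycle would yield a path in $\gamma\setminus E_i$ between two vertices $x,y\in V_i$, which together with the $x$--$y$ path in the spanning tree $\gamma\cap E_i$ gives two distinct paths in the tree $\gamma$; both verifications are valid.
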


\begin{proof}
Consider the following procedure for randomly constructing a subset of edges $F\subset E$.  First, pick a random $\gamma\in\Gamma$ according to the pmf $\mu^*$.  Next, pick a random $\gamma_i\in\Gamma_i$ according to the pmf $\mu_i$.  Finally, define the random set $F$ as
\begin{equation*}
    F := (\gamma\setminus E_i)\cup\gamma_i.
\end{equation*}
We claim that $F\in\Gamma$.

To see this, recall that in the proof of Lemma~\ref{lem:mu-star-restricts} we showed that $\gamma\cap E_i\in\Gamma_i$.  This implies that $|\gamma\cap E_i|=|V_i|-1=|\gamma_i|$.  Since $\gamma_i\subset E_i$, it follows that 
\begin{equation*}
|F| = |\gamma| - |\gamma\cap E_i| + |\gamma_i| = |\gamma| = |V|-1,
\end{equation*}
so $F$ has the correct number of edges for a spanning tree.  Thus, $F$ can only fail to be a spanning tree if it contains a cycle.

Suppose such a cycle, $C\subset E$, does exist.  Since $F\cap E_i=\gamma_i$ is a spanning tree, $C$ must use at least one edge $e\in E\setminus E_i$.  Let $S$ be the largest connected subset of $C\setminus E_i$ containing $e$.  Since $S\in\gamma$, $S$ must be the edges traversed by a path connecting two distinct vertices $x,y\in V_i$ and using only edges in $E\setminus E_i$.  Since $\gamma\cap E_i$ must be a spanning tree of $G_i$ by Lemma~\ref{lem:mu-star-restricts}, it contains a path from $x$ to $y$ consisting only of edges in $E_i$.  But this implies that $\gamma$ contains two different paths connecting $x$ to $y$, which contradicts the fact that $\gamma$ is a spanning tree.

Hence, the procedure outlined above produces a random spanning tree on $G$.  Let $\underline{\gamma}$, $\underline{\gamma_i}$, and $\underline{F}$ represent the corresponding random objects described in this procedure.  As stated, $\underline{F}$ is a random spanning tree on $G$ with a corresponding pmf $\mu$.  Although the formula for $\mu$ is complex, the resulting edge usage probabilities, $\eta$, are straightforward to compute.

First, suppose that $e\in E_i$.  Then
\begin{equation*}
\eta(e) = \mathbb{P}_{\mu}(e\in\underline{F})
= \mathbb{P}_{\mu_i}(e\in\underline{\gamma_i}) = \eta_i(e).
\end{equation*}
Similarly, if $e\notin E_i$, then
\begin{equation*}
\eta(e) = \mathbb{P}_{\mu}(e\in\underline{F})
= \mathbb{P}_{\mu^*}(e\in\underline{\gamma}\setminus E_i) =
\mathbb{P}_{\mu^*}(e\in\underline{\gamma}) =
\eta^*(e).
\end{equation*}
\end{proof}

The recursive algorithm for spanning tree modulus is made possible by the following theorem.

\begin{theorem}\label{thm:eta-star-restricts}
Let $J$ be a critical set for $G$ and let $G_i=(V_i,E_i)$ for $i=1,2,\ldots,k$ be the associated $k=Q(G_{\overline{J}})$ subgraphs constructed as above.  Let $\eta^*$ be the optimal edge usage probabilities for $G$ and let $\eta^*_i$ be the optimal edge usage probabilities for each $G_i$.  Then, for every $e\in E_i$, $\eta^*(e) = \eta^*_i(e)$.
\end{theorem}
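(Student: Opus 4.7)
The plan is to use Lemma \ref{lem:mu-star-restricts} to view $\mu^*$ as inducing a marginal pmf $\mu_i$ on each $\Gamma_i$, and then to argue by contradiction: if the edge usage probabilities of $\mu_i$ on $E_i$ disagreed with the (unique) optimal $\eta_i^*$ for $G_i$, we could glue in a better pmf via Lemma \ref{lem:glue-pmfs} to beat $\mu^*$ on the FEU objective for $G$, contradicting Theorem \ref{thm:mod-summary}.

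More concretely, I would first observe that the marginal $\mu_i$ defined in Lemma \ref{lem:mu-star-restricts} has edge usage probability on $E_i$ equal to $\eta^*$ restricted to $E_i$. This is a quick calculation: for $e\in E_i$, $e\in\gamma$ if and only if $e\in\gamma\cap E_i$ (provided $\gamma\cap E_i\in\Gamma_i$, which holds for $\gamma\in\supp\mu^*$ by the proof of Lemma \ref{lem:mu-star-restricts}), so $\mathbb{P}_{\mu_i}(e\in\underline{\gamma'})=\mathbb{P}_{\mu^*}(e\in\underline{\gamma})=\eta^*(e)$.

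Next, suppose for contradiction that $\eta^*(e)\neq\eta_i^*(e)$ for some $e\in E_i$. Let $\mu_i^*$ be any optimal pmf for the FEU problem on $G_i$. By Theorem \ref{thm:mod-summary} applied to $G_i$, the optimal edge usage probabilities $\eta_i^*$ are unique and uniquely minimize the sum of squared usage probabilities on $E_i$ among all pmfs on $\Gamma_i$. Since $\mu_i$ induces the usage probabilities $\eta^*|_{E_i}\neq\eta_i^*$, we get the strict inequality
\begin{equation*}
\sum_{e\in E_i}\eta_i^*(e)^2 < \sum_{e\in E_i}\eta^*(e)^2.
\end{equation*}

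Finally, apply Lemma \ref{lem:glue-pmfs} with this $\mu_i^*$ (on $G_i$) and $\mu^*$ (on $G$) to obtain a pmf $\mu\in\mathcal{P}(\Gamma)$ whose edge usage probabilities $\eta$ agree with $\eta_i^*$ on $E_i$ and with $\eta^*$ off $E_i$. Then
\begin{equation*}
\sum_{e\in E}\eta(e)^2 = \sum_{e\in E_i}\eta_i^*(e)^2 + \sum_{e\notin E_i}\eta^*(e)^2 < \sum_{e\in E}\eta^*(e)^2,
\end{equation*}
contradicting the optimality of $\mu^*$ for the FEU problem on $G$. Hence $\eta^*(e)=\eta_i^*(e)$ for every $e\in E_i$, as desired. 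The only real subtlety is checking that the marginalization is compatible with the restriction of usage probabilities to $E_i$; this is handled cleanly by Lemma \ref{lem:mu-star-restricts}, after which the rest of the argument is just a gluing-plus-optimality contradiction.
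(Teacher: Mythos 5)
Your proof is correct and follows essentially the same route as the paper: it combines the marginal pmf of Lemma~\ref{lem:mu-star-restricts} (whose usage probabilities on $E_i$ are $\eta^*|_{E_i}$) with the gluing construction of Lemma~\ref{lem:glue-pmfs} and the uniqueness of optimal usage probabilities from Theorem~\ref{thm:mod-summary}. The only difference is stylistic: you argue by contradiction via a strict inequality from non-optimality on $G_i$, while the paper runs the same comparison directly as a chain of inequalities forced to be equalities and then invokes uniqueness of $\eta^*$ on $G$.
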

\begin{proof}
Let $\mu^*$ be an optimal pmf for the spanning tree modulus of $G$ and let $\mu_i^*$ be an optimal pmf for the spanning tree modulus of $G_i$.  Let $\mu\in\mathcal{P}(\Gamma)$ be the pmf guaranteed by Lemma~\ref{lem:glue-pmfs} and let $\eta$ be the corresponding edge usage probabilities.  since $\mu^*_i$ is optimal, Lemma~\ref{lem:mu-star-restricts} implies that
\begin{equation*}
\sum_{e\in E_i}\eta_i^*(e)^2 \le 
\sum_{e\in E_i}\eta^*(e)^2.
\end{equation*}
By Lemma~\ref{lem:glue-pmfs}, then,
\begin{equation*}
\sum_{e\in E}\eta^*(e)^2 \le 
\sum_{e\in E}\eta(e)^2 = \sum_{e\in E_i}\eta^*_i(e)^2
+ \sum_{e\in E\setminus E_i}\eta^*(e)^2
\le \sum_{e\in E}\eta^*(e)^2.
\end{equation*}
By part 2 of Theorem~\ref{thm:mod-summary}, it follows that $\eta=\eta^*$.  In particular, for $e\in E_i$,
\begin{equation*}
\eta^*(e) = \eta(e) = \eta_i^*(e).
\end{equation*}
\end{proof}

Theorem~\ref{thm:eta-star-restricts} suggests the following algorithm for computing spanning tree modulus using an algorithm for graph vulnerability, such as the one Cunningham provides.  First, find the value $\theta(G)$ and a critical set $J$.  Theorem~\ref{thm:crit-set-eta-star} shows that $\eta^*$ takes the value $\theta(G)$ on all edges of $J$.  Now, remove these edges from $G$.  This results in a number of connected subgraphs $G_1,G_2,\ldots$.  By Theorem~\ref{thm:eta-star-restricts}, if we find $\theta(G_1)$ along with a critical set $J_1$ of $G_1$, then the optimal $\eta^*$ (for spanning tree modulus of $G$!) takes the value $\theta(G_1)$ on the edges of $J_1$.  This procedure can be repeated, each time finding $\eta^*$ for at least one edge.  Thus, after finitely many iterations, $\eta^*$ will be known for all edges.  A more detailed description of this algorithm is presented in Section~\ref{sec:spt-mod-algorithm}.

\section{Cunningham's algorithm for graph vulnerability}
\label{sec:graph-vulnerability}

In this section, we review the algorithm presented in~\cite{cunningham1985optimal} along with its theoretical background.  In some places we have provided our own explanations or proofs where we feel it will aid in understanding.

First, if $J\subseteq E$ and $\gamma\in\Gamma$, then from~\eqref{eq:theta-J}
\begin{equation*}
\mathcal{M}(J) \le |J\cap\gamma| \le \min\{|\gamma|,|J|\} = 
\min\{|V|-1,|J|\}.
\end{equation*}
Moreover, since $J \subseteq E$, we have $|J| \le |E|$.
Thus, for any $\emptyset\ne J\subseteq E$,
\begin{equation}\label{eq:Theta}
\theta(J) = \frac{\mathcal{M}(J)}{|J|} \in
\Theta := 
\left\{
\frac{p}{q} :
1\le p\le \min\{|V|-1,q\},\;1\le q\le |E|
\right\}.
\end{equation}
Since there are finitely many options for the numerator and denominator in $\theta(J)$, there are finitely many possible values for $\theta(G)$.  In~\cite{cunningham1985optimal}, this fact is exploited to compute $\theta(G)$.  Essentially, one performs a binary search on the set of possible values, making use of an oracle that can determine whether or not $\theta(G)\le\frac{p}{q}$ for a given fraction $\frac{p}{q}$.  This oracle is derived by performing a few transformations on the problem that convert it to a problem on a matroid that can be solved by a greedy algorithm.

\begin{lemma}
For any $p,q\ge 1$, we have 
\begin{equation}\label{eq:vulnerability equivalency}
  \theta(G) \le \frac{p}{q} \iff 0 \le \min_{J\subseteq E}\Big(\frac{p}{q}|J| - \mathcal{M}(J)\Big). 
\end{equation}
\end{lemma}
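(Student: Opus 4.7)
The plan is to unpack both sides of the biconditional directly from the definition of $\theta(G)=\max_{J\subseteq E}\theta(J)$, and reduce the equivalence to a simple cross-multiplication that is valid because $q\ge 1$ and $|J|\ge 1$ whenever $J\ne\emptyset$.

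First I would dispose of the trivial case $J=\emptyset$. From the definition of $\mathcal{M}$ as a minimum of $|\gamma\cap J|$, we have $\mathcal{M}(\emptyset)=0$, so the quantity $\frac{p}{q}|J|-\mathcal{M}(J)$ equals $0$ at $J=\emptyset$ regardless of $p,q$. Hence the minimum on the right-hand side of~\eqref{eq:vulnerability equivalency} is always $\le 0$, and it equals $0$ precisely when no nonempty $J$ gives a strictly negative value.

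For the forward direction, assume $\theta(G)\le p/q$. Fix any nonempty $J\subseteq E$. Since $\theta(J)\le\theta(G)\le p/q$ and $|J|\ge 1$, $q\ge 1$ allow us to multiply through, we obtain $q\,\mathcal{M}(J)\le p\,|J|$, equivalently $\frac{p}{q}|J|-\mathcal{M}(J)\ge 0$. Together with the trivial case $J=\emptyset$ this shows that the minimum on the right-hand side of~\eqref{eq:vulnerability equivalency} is $\ge 0$.

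For the converse, assume $\frac{p}{q}|J|-\mathcal{M}(J)\ge 0$ for every $J\subseteq E$. For any nonempty $J$, dividing by $|J|>0$ yields $\theta(J)=\mathcal{M}(J)/|J|\le p/q$; this also holds trivially for $J=\emptyset$ since $\theta(\emptyset)=0$. Taking the maximum over all $J\subseteq E$ gives $\theta(G)\le p/q$.

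I do not anticipate any real obstacle: the entire content of the lemma is the fact that one can cross-multiply the inequality $\mathcal{M}(J)/|J|\le p/q$ because $|J|,q>0$, together with the observation that the maximum of $\theta(J)$ over all $J$ is attained on a nonempty set (the empty set contributes $0$ and is never binding).
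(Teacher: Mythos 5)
Your proof is correct and follows essentially the same route as the paper's: unpacking $\theta(G)=\max_{J\subseteq E}\theta(J)$ and cross-multiplying $\mathcal{M}(J)/|J|\le p/q$ over all $J$, which is exactly the paper's chain of equivalences. Your explicit treatment of the case $J=\emptyset$ (where $\mathcal{M}(\emptyset)=0$ makes the term vanish) is a minor added care that the paper leaves implicit, not a different argument.
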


\begin{proof}
\begin{equation*}
\begin{split}
 \theta(G) \le \frac{p}{q} &\iff \max_{J \subseteq E} \theta(J) \le \frac{p}{q}  \iff \theta(J) \le \frac{p}{q}\quad \forall J \subseteq E    \\
 & \iff 0 \le \frac{p}{q}|J| - \mathcal{M}(J) \quad \forall J \subseteq E    \\
 & \iff 0 \le \min_{J\subseteq E}\Big(\frac{p}{q}|J| - \mathcal{M}(J)\Big).
\end{split}
\end{equation*}

\end{proof}

 From this lemma, we can see that checking whether $\theta(G)\le\frac{p}{q}$ is equivalent to solving the minimization problem in~\eqref{eq:vulnerability equivalency}.  Cunningham called this minimization problem the \textit{optimal attack problem}.

\subsection{Vulnerability and the graphic matroid}

Next, the optimal attack problem is converted into a question about a matroid.  By a \emph{matroid}, we mean (one of several equivalent definitions) a tuple $M=(E,f)$ where $E$ is a finite set and $f:2^E\to\mathbb{N}_0$ is a function assigning to every subset of $E$ a non-negative integer value satisfying the following properties.
\begin{enumerate}
\item For each $J \subseteq E$, $f(J) \leq |J|$.
\item If $J' \subseteq J \subseteq E$, then $f(J') \leq f(J)$.
\item If $J, J' \subseteq E$, then $f(J \cup J') + f(J \cap J') \leq f(J) + f(J')$.
\end{enumerate}
Any function $f$ that satisfies these conditions is called a \emph{polymatroid function}.  An important example of a matroid in the context of the present work is the \emph{graphic matroid} $M=(E,f)$ where $E$ is the edge set of a connected graph, and $f$ is the \emph{graphic rank function} defined as
\begin{equation}\label{eq:graphic-rank}
f(J) := |V| - Q(G_J).    
\end{equation}

Using Lemma~\ref{lem:M-Q}, we can write
\begin{equation}\label{eq:MJ-fJbar}
\mathcal{M}(J) = |V|-1-f(\overline{J}).
\end{equation}
So, the minimization~\eqref{eq:vulnerability equivalency} can be rewritten as
\begin{equation*}
    \min_{J\subseteq E}\Big(\frac{p}{q}|J| - |V| + 1  + f(\overline{J})\Big). 
\end{equation*}
Thus, we arrive at a fundamental lemma for Cunningham's algorithm.
\begin{lemma}\label{lem:fund-cunningham}
The following are equivalent.
\begin{enumerate}
    \item $\theta(G)\le\frac{p}{q}$
    \item $\min\limits_{J\subseteq E}\left(\frac{p}{q}|J|+f(\overline{J})\right) \ge |V|-1$
\end{enumerate}
\end{lemma}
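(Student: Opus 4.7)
The plan is to chain together the two results the paper has already put in place: the previous lemma (equation \eqref{eq:vulnerability equivalency}) and the identity \eqref{eq:MJ-fJbar} expressing $\mathcal{M}(J)$ in terms of the graphic rank $f(\overline{J})$. Since both ingredients are already available, the argument reduces to a single algebraic rearrangement inside the minimization.

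More concretely, I would start from the previous lemma, which tells us that $\theta(G) \le \frac{p}{q}$ is equivalent to
\begin{equation*}
0 \le \min_{J\subseteq E}\left(\frac{p}{q}|J| - \mathcal{M}(J)\right).
\end{equation*}
Next, I would substitute $\mathcal{M}(J) = |V| - 1 - f(\overline{J})$, which is exactly \eqref{eq:MJ-fJbar} (and itself a consequence of Lemma~\ref{lem:M-Q} together with the definition of the graphic rank function in \eqref{eq:graphic-rank}). This turns the expression inside the minimum into $\frac{p}{q}|J| + f(\overline{J}) - (|V|-1)$. Since $|V|-1$ is a constant independent of $J$, it can be pulled out of the minimum, and the inequality $0 \le \min_J(\cdots)$ rearranges to
\begin{equation*}
|V|-1 \le \min_{J\subseteq E}\left(\frac{p}{q}|J| + f(\overline{J})\right),
\end{equation*}
which is precisely statement 2.

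I do not anticipate a real obstacle here: both directions of the equivalence follow from the same chain of reversible rewrites, so the proof is symmetric. The only thing to be a bit careful about is making sure the substitution of $\mathcal{M}(J)$ is valid for every $J \subseteq E$ (including $J = \emptyset$, where $\mathcal{M}(\emptyset) = 0$ and $f(\overline{\emptyset}) = f(E) = |V|-1$, so the identity still holds), so that the substitution is legitimate uniformly across the minimization. Once that is noted, the computation is a one-line manipulation.
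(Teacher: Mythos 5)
Your proposal is correct and follows essentially the same route as the paper, which establishes the lemma through the preceding discussion by substituting \eqref{eq:MJ-fJbar} into \eqref{eq:vulnerability equivalency} and rearranging the constant $|V|-1$. Your extra check that the substitution holds for $J=\emptyset$ is a harmless (and valid) addition.
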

As shown in~\cite{cunningham1985optimal}, the minimum value on the left-hand side of 2 is related to the concept of polymatroid bases.

\subsection{The polymatroid theorem and graph vulnerability}

Given a polymatroid function $f$ on a set $E$, we 
define the \emph{polymatroid} associated with $f$ to be the polytope defined as
\begin{equation}\label{eq:Pf}
    P(f) := \{ x \in \mathbb{R}_{\ge 0}^{E},~ x(J) \leq f(J)\;\text{for all } J \subseteq E \},
\end{equation}
where we use the notation $x(J)$ to represent the sum
\begin{equation*}
x(J) = \sum_{e\in J}x(e).
\end{equation*}
For any $y \in \mathbb{R}_{\ge 0}^{E}$, any maximal vector $x \in P(f)$ with $x\leq y$ is called a $P(f)$-basis of $y$.

The following is the key theorem of~\cite{cunningham1985optimal} that establishes the connection between graph vulnerability and polymatroid bases.
\begin{theorem}[Theorem~1 of~\cite{cunningham1985optimal}]\label{thm:polymatroid theorem}
Let $y \in \mathbb{R}_{\ge 0}^{E}$ and let $x$ be any $P(f)$-basis of $y$, then
\begin{equation}\label{eq:basis}
    x(E) = \min_{J \subseteq E}\big(y(J) + f(\Bar{J}) \big).
\end{equation}
\end{theorem}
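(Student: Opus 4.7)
The plan is to establish both inequalities of the equality $x(E) = \min_{J \subseteq E}(y(J) + f(\overline{J}))$. The upper bound is a routine splitting argument, while the lower bound is obtained by exhibiting a specific $J$ that achieves the minimum, using the maximality of the basis together with submodularity of $f$.

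For the inequality $x(E) \le y(J) + f(\overline{J})$ valid for every $J \subseteq E$, I would split $x(E) = x(J) + x(\overline{J})$. The defining requirement $x \le y$ for a $P(f)$-basis gives $x(J) \le y(J)$, while membership $x \in P(f)$ applied to the set $\overline{J}$ gives $x(\overline{J}) \le f(\overline{J})$. Adding these and minimizing over $J$ yields $x(E) \le \min_{J \subseteq E}(y(J) + f(\overline{J}))$.

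For the reverse inequality I would exploit maximality. Let $U := \{e \in E : x(e) < y(e)\}$ and call $A \subseteq E$ \emph{tight} if $x(A) = f(A)$. The key claim is that every $e \in U$ lies in some tight set $A_e$: otherwise every constraint $x(A) \le f(A)$ with $A \ni e$ has strictly positive slack, and by finiteness of $2^E$ we could take $\epsilon$ to be the minimum of $y(e)-x(e)$ and these slacks, increase $x(e)$ by $\epsilon$, and obtain a strictly larger vector in $P(f)$ still dominated by $y$, contradicting maximality of $x$. Next, submodularity (property 3 of polymatroid functions) shows the family of tight sets is closed under union: if $A$ and $B$ are tight then
\begin{equation*}
f(A) + f(B) = x(A) + x(B) = x(A \cup B) + x(A \cap B) \le f(A \cup B) + f(A \cap B) \le f(A) + f(B),
\end{equation*}
forcing equality throughout and tightness of $A \cup B$. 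Setting $\overline{J} := \bigcup_{e \in U} A_e$ (with $J := E \setminus \overline{J}$) therefore produces a tight set containing $U$. By construction $x(e) = y(e)$ for all $e \in J$, so $x(J) = y(J)$, and combining with $x(\overline{J}) = f(\overline{J})$ gives $x(E) = y(J) + f(\overline{J})$, matching the upper bound.

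The main obstacle is the maximality step: turning the geometric statement that $x$ is a maximal vector in $P(f) \cap [0,y]$ into the combinatorial conclusion that each unsaturated edge is covered by a tight set. This relies on the finiteness of the constraint family, which guarantees the minimum slack is strictly positive so that a true increase in $x(e)$ is possible. Once this is in hand, submodularity does the rest: the tight sets form a lattice, a single tight $\overline{J} \supseteq U$ exists, and the desired split drops out.
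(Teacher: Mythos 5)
Your proof is correct. Note that the paper does not prove this statement itself---it is quoted as Theorem~1 of Cunningham's paper---and the only piece reproduced in the text is the easy inequality, which appears as \eqref{eq:pre-cunningham} in Section~3.3; your splitting argument $x(E)=x(J)+x(\overline{J})\le y(J)+f(\overline{J})$ is exactly that computation. Your lower bound is the standard argument (and essentially the one in Cunningham's original proof): maximality of $x$ forces every edge $e$ with $x(e)<y(e)$ to lie in a tight set (the finiteness of the constraint family guaranteeing a positive admissible increment otherwise), submodularity together with the modularity of $x$ makes the family of tight sets closed under unions, and the union $\overline{J}$ of these tight sets gives a set $J$ on which $x=y$, hence $x(E)=y(J)+f(\overline{J})$, matching the upper bound. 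The only detail worth adding is the degenerate case $U=\emptyset$: there $\overline{J}=\emptyset$, and one needs $f(\emptyset)=0$ (which follows from property 1 and non-negativity of $f$) so that the empty set is tight; equivalently, observe directly that $x=y$ everywhere and take $J=E$.
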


To see how Theorem~\ref{thm:polymatroid theorem} can be applied to the graph vulnerability problem, we recall the graphic rank function $f$ defined in~\eqref{eq:graphic-rank} is a polymatroid function.  Thus, if we define $y\equiv\frac{p}{q}$ (the constant vector) and let $x$ be a $P(f)$-basis of $y$, then Lemma~\ref{lem:fund-cunningham} combined with Theorem~\ref{thm:polymatroid theorem} implies that $\theta(G)\le\frac{p}{q}$ if and only if $x(E)\ge |V|-1$.  Thus, an algorithm that can efficiently compute polymatroid bases can be used together with a binary search to compute the vulnerability of a graph.  Cunningham provided such an algorithm in~\cite{cunningham1985optimal}.

\subsection{Cunningham's greedy algorithm}\label{sec:cunningham-greedy}

By definition, if $f$ is a polymatroid function, $x\in P(f)$, $x\le y$ and $J\subseteq E$, then $x(\overline{J})\le f(\overline{J})$ and $x(J)\le y(J)$, so
\begin{equation}\label{eq:pre-cunningham}
    x(E) = x(J) + x(\overline{J})
    \le y(J) + f(\overline{J}).
\end{equation}
Theorem~\ref{thm:polymatroid theorem} implies that if $x$ is a $P(f)$-basis of $y$, then there exists a $J\subseteq E$ for which~\eqref{eq:pre-cunningham} holds as equality.  In particular, this $J$ must satisfy
\begin{equation}\label{eq:Pf-basis}
x = y\;\text{on }J\qquad\text{and}\qquad
x(\overline{J}) = f(\overline{J}).
\end{equation}
A set $\overline{J}$ satisfying the second of these equalities is called \emph{tight} with respect to $x$.  Cunningham's greedy algorithm simultaneously finds a $P(f)$-basis of a given $y$ together with a set $J$ satisfying~\eqref{eq:Pf-basis}.  A pseudocode listing of this algorithm is shown in Algorithm~\ref{alg:Cunningham}.

\begin{algorithm}
\caption{Cunningham's algorithm\\
Input: G=(V,E), y}\label{alg:Cunningham}
\begin{algorithmic}[1]
	\State $x \gets 0$ 
	\State $\overline{J} \gets \emptyset$
	\ForAll{$j\in E$}
	\State $\epsilon, J'(j) \gets \min \{f(J')-x(J') : j \in J'\subseteq E\}$
	\If
	{$\epsilon < y(j) - x(j)$}
	\State $\overline{J} \gets \overline{J} \cup J'(j)$
	\Else
	{}
	\State $\epsilon = y(j) -x(j)$
	\EndIf
	\State $x(j) \gets x(j) + \epsilon$
	\EndFor
	\State\Return $x, J=E\setminus\overline{J}$
\end{algorithmic}
\end{algorithm}

\begin{remark}
In fact, Cunningham's original version of Algorithm~\ref{alg:Cunningham} used the non-strict inequality ($\le$) rather than the strict inequality ($<$) on line 5.  This choice only exacerbates the difficulty of finding the critical set described in Section~\ref{sec:obtain-crit-set}.  Additional modifications are described in Section~\ref{sec:cunningham-mods}.
\end{remark}

At the beginning of the algorithm, $x$ is initialized to the zero vector and $\overline{J}$ is initialized to the empty set (in other words, $J$ is initialized to all of $E$).  The algorithm proceeds by looping over all edges $j$ of $E$.  On each iteration of the loop, one computes the value
\begin{equation}\label{eq:epsilon max}
    \epsilon_{\max}(j) = \max\{\epsilon: x + \epsilon\textbf{1}_j \in P(f)\}
\end{equation}
along with a set $J'(j)$ containing $j$ and satisfying
\begin{equation}\label{eq:tight-set-j}
f(J'(j)) = x(J'(j)) + \epsilon_{\max}(j).
\end{equation}
The value $\epsilon_{\max}$ is the largest value by which we can increment $x$ on edge $j$ without leaving $P(f)$ and the set $J'(j)$ is a constraint that would be violated if we were to increase $x$ by any more on that edge.

Next in the algorithm is a decision.  If $x(j)$ can be increased by $\epsilon_{\max}$ without violating the constraint that $x(j)\le y(j)$, then $x(j)$ is incremented by that amount and the edges in $J'(j)$ are added to $\overline{J}$ (equivalently, $J$ is intersected with $\overline{J'(j)}$).  Otherwise, $x(j)$ is set to $y(j)$ and $\overline{J}$ is not changed.

A proof of the correctness of the algorithm can be found in~\cite{cunningham1985optimal}.  Additional details omitted from the discussion there can be found in~\cite{kottegoda2020}.  The important theorem is as follows.
\begin{theorem}\label{thm:cunningham-alg}
When Algorithm~\ref{alg:Cunningham} completes, the vector $x$ is a $P(f)$-basis of $y$ and $\overline{J}$ is a \emph{tight set} with respect to $x$.  That is, $x(\overline{J})=f(\overline{J})$.
\end{theorem}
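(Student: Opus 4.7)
The plan is to maintain three loop invariants throughout Algorithm~\ref{alg:Cunningham} and read the conclusion off their terminal state. The invariants are: (i) $x\in P(f)$; (ii) $x\le y$ componentwise; and (iii) the current $\overline{J}$ is tight with respect to the current $x$, i.e., $x(\overline{J})=f(\overline{J})$. All three hold at initialization, since $x\equiv 0$, $\overline{J}=\emptyset$, and $f(\emptyset)=0$.

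The principal tool for maintaining (iii) is the fact that \emph{tightness is closed under union}: if $A,B\subseteq E$ are tight with respect to the same $x\in P(f)$, then so is $A\cup B$. I would prove this as a standalone lemma via the submodularity chain
\begin{equation*}
f(A)+f(B)=x(A)+x(B)=x(A\cup B)+x(A\cap B)\le f(A\cup B)+f(A\cap B)\le f(A)+f(B),
\end{equation*}
which forces equality throughout and in particular yields $x(A\cup B)=f(A\cup B)$.

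For one iteration of the loop on an edge $j$, I would do a case split on the \texttt{if}/\texttt{else} branches. In the \texttt{if} branch, the definition of $\epsilon_{\max}(j)$ and the choice of $J'(j)$ in~\eqref{eq:tight-set-j} guarantee that $J'(j)$ is tight with respect to the post-update $x$. The old $\overline{J}$ remains tight too: if $j\notin\overline{J}$ then $x(\overline{J})$ is untouched, while if $j\in\overline{J}$ the tightness of $\overline{J}$ already forces $\epsilon_{\max}(j)=0$ and hence leaves $x$ unchanged. Applying the union-closure lemma to $\overline{J}$ and $J'(j)$ restores (iii). In the \texttt{else} branch, $\overline{J}$ is untouched and $x(j)$ rises by at most $\epsilon_{\max}(j)$, preserving (i); the same case analysis as above shows that $x(\overline{J})$ is unchanged, preserving (iii). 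Invariant (ii) is immediate from the $\min$ implicit in the choice of $\epsilon$.

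At termination, (iii) is exactly the tight-set claim. To conclude that $x$ is a $P(f)$-basis of $y$, I need maximality: no coordinate $x(j)$ admits a positive increase while preserving (i) and (ii). I would verify this edge by edge. At the end of the iteration that processed $j$, either $x(j)=y(j)$ (else branch) or $j\in J'(j)\subseteq\overline{J}$ with $\overline{J}$ tight (if branch); in the latter case any positive increase of $x(j)$ would push $x(\overline{J})$ above $f(\overline{J})$, violating (i). The remaining subtlety, and the main obstacle, is propagating this per-edge maximality all the way to the end of the algorithm. It rests on the monotonicity observation that the loop body only \emph{increases} coordinates of $x$ and only \emph{enlarges} $\overline{J}$, so a $P(f)$-constraint that is tight at some moment stays tight at every subsequent moment, and the upper bound $x(j)\le y(j)$ is never relaxed. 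These monotonicity facts carry the per-edge maximality to termination, completing the proof.
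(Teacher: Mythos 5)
Your proposal is correct, but it cannot be ``the same as the paper's proof'' for a simple reason: the paper does not prove Theorem~\ref{thm:cunningham-alg} at all --- it defers entirely to Cunningham's original paper and to the thesis cited alongside it. What you have written is a self-contained proof along the standard lines of Cunningham's own argument: the three loop invariants ($x\in P(f)$, $x\le y$, $\overline{J}$ tight), the union-closure of tight sets via the submodularity chain, and the per-edge maximality check (either $x(j)=y(j)$ from the else branch, or $j\in J'(j)\subseteq\overline{J}$ with a tight constraint blocking any increase), propagated to termination by the monotonicity of $x$ and of $\overline{J}$. The case analysis in the if branch when $j\in\overline{J}$ (tightness forces $\epsilon_{\max}(j)=0$, so $x$ is unchanged and the union lemma applies) is exactly the point that makes the invariant argument go through, and you have it right. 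One step you leave implicit and should state: coordinate-wise non-increasability is equivalent to maximality of $x$ as a vector, because $P(f)$ is down-closed --- if some $z\in P(f)$ with $x\le z\le y$ and $z\ne x$ existed, then increasing $x$ in a single coordinate $j$ with $z(j)>x(j)$ would already stay in $P(f)$ (every defining constraint $x(J)\le f(J)$ is a sum upper bound) and below $y$. This is a one-line observation, but without it the per-edge argument does not literally establish the basis property. With that sentence added, your proof stands on its own and has the advantage, relative to the paper, of making the correctness of Algorithm~\ref{alg:Cunningham} verifiable without consulting the external references.
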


\subsection{Minimum cut formulation of the subproblem}\label{sec:min-cut}

The most computationally challenging part of Algorithm~\ref{alg:Cunningham} occurs on line 4 when minimizing $f(J)-x(J)$ over all subsets $J\subseteq E$ that contain a particular edge $j$.  Although this appears to be combinatorially difficult at first look, Cunningham's approach was to recast this optimization problem as a minimum cut problem on an auxiliary graph.

As described in~\cite{cunningham1985optimal}, given $x \in P(f)$ and $j \in E$, the first step is to construct an undirected capacitated graph $G'$.  The vertex set for $G'$ is $V \cup \{r, s\}$, where $r$ and $s$ are new vertices which will respectively be the source and sink of a network flow. Each edge $e \in E$ has a corresponding edge in $G'$ with capacity $\frac{1}{2}x(e)$. Edges are added connecting from $s$ to each vertex $v \in V$ and having capacity 1. Edges are also added from $r$ to each $v \in V$ having capacity $\infty$ if $v$ is an end point of $j$, or $\frac{1}{2}x(\delta(v))$ otherwise. Here $\delta(U) = \{ e \in E,~ e \text{ has exactly one end in}~ U \subseteq V \}$, and $\delta (v)$ is shorthand for $\delta(\{v\})$.  An example of this augmented graph $G'$ is shown in Figure~\ref{fig:graph-cut1}.  As shown in~\cite{cunningham1985optimal}, it is straightforward to recover $\epsilon_{\max}(j)$ in~\eqref{eq:epsilon max} from the value of a minimum $rs$-cut in $G'$.  Moreover, the edges of a minimum $rs$-cut (after removing any edges incident on $r$ or $s$) form a tight set $J'(j)$ satisfying~\eqref{eq:tight-set-j}.

\begin{figure}
	\begin{center}
		\includegraphics[width=0.8\textwidth]{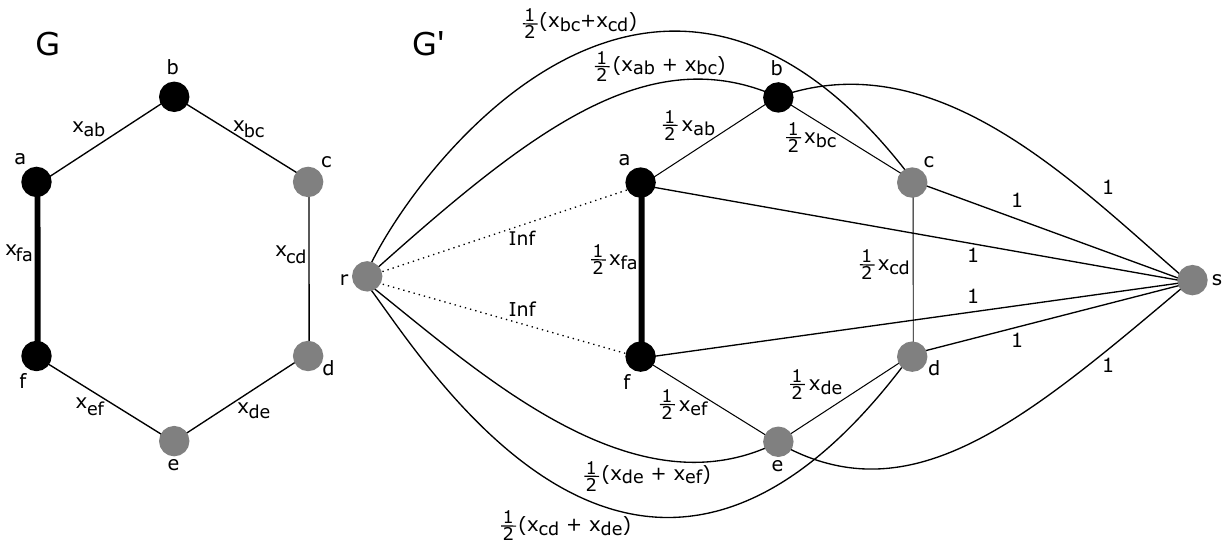}
		\caption{An example of the minimum cut problem described in Section~\ref{sec:min-cut}.  The original graph $G$ is shown on the left.  The goal is to increase the value of $x$ on the highlighted edge $j=\{f,a\}$ as much as possible without violating any constraints of the polymatroid defined in~\eqref{eq:Pf}.  The right-hand side shows the corresponding flow graph $G'$.  The maximum increment $\epsilon_{\max}(j)$ is computed from the value of the minimum cut; a tight set $J'(j)$ is obtained from the minimum cut edges.}
	\label{fig:graph-cut1}
	\end{center}
\end{figure}

\subsection{A polynomial-time algorithm for graph vulnerability}

The polynomial-time algorithm for computing $\theta(G)$, described in~\cite{cunningham1985optimal} proceeds as follows.  First, recall that $\theta(G)$ must belong to a finite set (e.g., $\theta(G)$ must belong to $\Theta$ as defined in~\eqref{eq:Theta}).  Thus, if one can produce a polynomial-time oracle to determine whether $\theta(G)\le\frac{p}{q}$ for a given $\frac{p}{q}$, then a simple binary search of $\Theta$ will produce the value $\theta(G)$ in polynomial time.  Algorithm~\ref{alg:Cunningham} provides just such an oracle if one uses the minimum cut formulation of Section~\ref{sec:min-cut}.

\section{Modifications to Cunningham's algorithm}\label{sec:cunningham-mods}

Before we proceed to computing the spanning tree modulus, we need to discuss some modifications to Cunningham's algorithm.  The first modification is necessary in order to ensure that we can obtain not only the graph vulnerability, $\theta(G)$ but also a critical set of edges.  The second modification is helpful for efficient computation in exact arithmetic.

\subsection{Obtaining a critical set}\label{sec:obtain-crit-set}

An important step in the spanning tree modulus algorithm described in Section~\ref{sec:vulnerability-to-modulus} is the identification of both the vulnerability $\theta(G)$ \emph{and} a critical set $J$ satisfying~\eqref{eq:critical-set}.  This latter point proved somewhat more difficult than we originally anticipated, due to an interesting consequence of the theory.

What we observed in our initial implementations of Algorithm~\ref{alg:Cunningham} is that, when it is called with $y=\theta(G)$, it will often return an empty set $E\setminus\overline{J}$.  (That is, at the end of the algorithm, $\overline{J}=E$.)  This phenomenon is more common when using exact arithmetic than when using floating point arithmetic.  This failure to produce a critical set initially surprised us and eventually exposed a subtle misunderstanding on our part.  Since it does not appear to be addressed in the literature, we wish to call attention to this aspect of the algorithm and to present a simple modification that is guaranteed to produce a critical set.

The key to understanding why Algorithm~\ref{alg:Cunningham} might return an empty set when called with $y$ exactly equal to $\theta(G)$ is to recall the guarantee on $\overline{J}$.  In particular, Theorem~\ref{thm:cunningham-alg} guarantees that $\overline{J}$ is tight with respect to the $P(f)$-basis $x$.  Now, consider the following theorem.

\begin{theorem}\label{thm:cunningham-problem}
If $\theta(G)=\frac{p}{q}$ and if $x$ is a $P(f)$-basis for $y\equiv\frac{p}{q}$, then $E$ is tight with respect to $x$.
\end{theorem}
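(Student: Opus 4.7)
The plan is to combine Theorem~\ref{thm:polymatroid theorem} with Lemma~\ref{lem:fund-cunningham}, using the fact that $f(E) = |V|-1$ since $G$ is connected (so $Q(G_E) = 1$ in the graphic rank function~\eqref{eq:graphic-rank}). The statement $E$ is tight with respect to $x$ amounts to the single equality $x(E) = f(E) = |V|-1$, and both inequalities $x(E) \le |V|-1$ and $x(E) \ge |V|-1$ will be nearly immediate from the tools already assembled.

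First, I would record the upper bound. Because $x$ is a $P(f)$-basis of $y$, in particular $x \in P(f)$, so taking $J = E$ in~\eqref{eq:Pf} gives
\begin{equation*}
x(E) \le f(E) = |V| - Q(G_E) = |V|-1.
\end{equation*}

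Next I would produce the matching lower bound. By Theorem~\ref{thm:polymatroid theorem} applied to the $P(f)$-basis $x$ of $y \equiv \frac{p}{q}$,
\begin{equation*}
x(E) = \min_{J\subseteq E}\bigl(y(J) + f(\overline{J})\bigr)
= \min_{J\subseteq E}\Bigl(\tfrac{p}{q}|J| + f(\overline{J})\Bigr).
\end{equation*}
The hypothesis $\theta(G) = \frac{p}{q}$ gives in particular $\theta(G) \le \frac{p}{q}$, so item~2 of Lemma~\ref{lem:fund-cunningham} tells us that the right-hand side is bounded below by $|V|-1$. Hence $x(E) \ge |V|-1$.

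Combining the two inequalities yields $x(E) = f(E)$, which is exactly the condition that $E$ is tight with respect to $x$. There is no real obstacle here beyond lining up the two named results; the mild point worth flagging is that $f(E) = |V|-1$ uses the connectedness of $G$ assumed throughout, and that the proof uses only the inequality $\theta(G) \le \frac{p}{q}$, not the matching lower bound, so in fact the same argument shows $E$ is tight whenever $y$ is the constant $\frac{p}{q} \ge \theta(G)$. This last observation explains the phenomenon described just before the theorem: calling Algorithm~\ref{alg:Cunningham} with $y \equiv \theta(G)$ can legitimately terminate with $\overline{J} = E$, since $E$ itself is already a valid tight set returned by the algorithm.
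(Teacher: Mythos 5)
Your proposal is correct and follows essentially the same argument as the paper: it combines $x(E)\le f(E)$ (since $x\in P(f)$), the evaluation $x(E)=\min_{J\subseteq E}\bigl(\tfrac{p}{q}|J|+f(\overline{J})\bigr)$ from Theorem~\ref{thm:polymatroid theorem}, the lower bound $|V|-1$ from Lemma~\ref{lem:fund-cunningham}, and $f(E)=|V|-1$ by connectedness, exactly as in the paper's chain of inequalities. Your closing remark that only $\theta(G)\le\tfrac{p}{q}$ is needed is a correct (and mildly stronger) observation, but it does not change the route.
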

\begin{proof}
Since $\theta(G)=\frac{p}{q}$, we have
\begin{equation*}
\begin{aligned}
f(E) &\ge x(E)
&& \text{since $x\in P(f)$}\\
&= \min_{J\subseteq E}\left(\frac{p}{q}|J|+f(\overline{J})\right)\qquad
&& \text{by Theorem~\ref{thm:polymatroid theorem}}\\
& \ge |V|-1 && \text{by Lemma~\ref{lem:fund-cunningham}}\\
&= f(E) && \text{since $G$ is connected}.
\end{aligned}
\end{equation*}
This implies that $x(E)=f(E)$.  In other words, $E$ is tight.
\end{proof}

\begin{figure}
    \centering
    \includegraphics[width=0.4\textwidth]{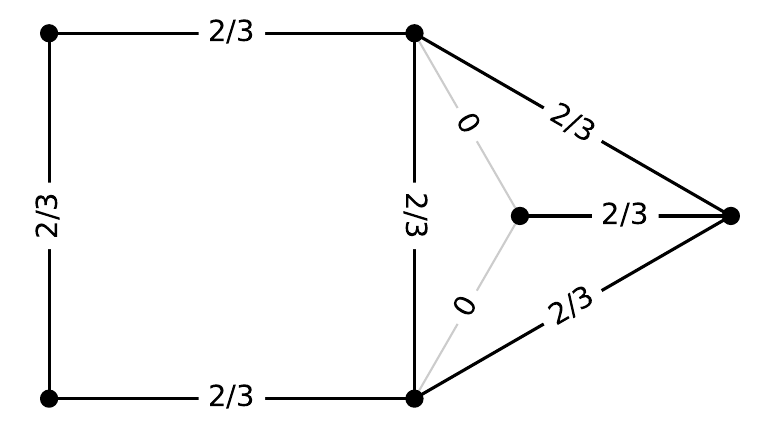}\hspace{1cm}
    \includegraphics[width=0.4\textwidth]{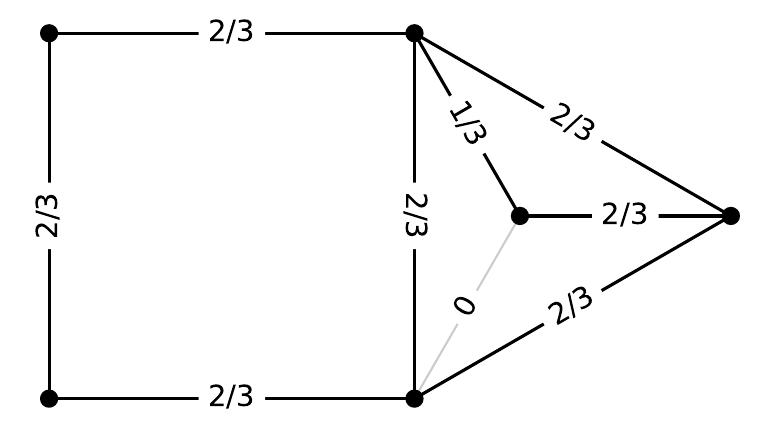}
    \caption{An example, described in Section~\ref{sec:obtain-crit-set}, of how Cunningham's algorithm may not produce a critical set of edges.}
    \label{fig:cunningham-problem}
\end{figure}

Theorem~\ref{thm:cunningham-problem} shows that there is no theoretical barrier to Algorithm~\ref{alg:Cunningham} returning the empty set when $y=\theta(G)$.  To see how this can occur in practice, consider the graph in Figure~\ref{fig:cunningham-problem}.  This graph has $\theta(G)=\frac{2}{3}$ with a critical set consisting of the three left-most edges.  The left side of Figure~\ref{fig:cunningham-problem} shows the value of $x$ after 7 steps of Algorithm~\ref{alg:Cunningham} with $y\equiv\frac{2}{3}$.  Edges are labeled with their current $x$-value and the darker edges are those that have already been visited by the algorithm.  In each of the steps thus far, the inequality on line 5 of the algorithm has been false and, therefore, $\overline{J}=\emptyset$.  There are two edges remaining to process.

Suppose the algorithm next moves to the upper of these two edges.  In considering the polymatroid constraints in line 4 of the algorithm, there are two possible ``tightest'' constraints that might be located.  The first is the set $J'$ consisting of the tetrahedral subgraph.  This set has rank $f(J')=3$ and (currently) $x(J')=\frac{8}{3}$.  Thus, $x$ on the edge currently under consideration cannot be made larger than $\frac{1}{3}$.  The other constraint that may be found in this step is the set $J'=E$.  This set has rank $f(J')=5$ and (currently) $x(J')=\frac{14}{3}$.  Again, $x$ on the edge currently under consideration cannot be increased above $\frac{1}{3}$.  The latter choice will result in $\overline{J}=E$ and the algorithm will not locate a critical set.

Which of these two constraints are found on line 4 of the algorithm depends heavily on implementation details for the minimum cut algorithm.  For example, we have observed that the algorithm returns an empty set more frequently in our Python implementation (based on the NetworkX library~\cite{Networkx}) than in our C++ implementation (based on the Boost Graph Library~\cite{siek2002boost}) of this algorithm.

In practice, this means that using the oracle-based binary search described in~\cite{cunningham1985optimal,gueye2010design} will always find $\theta(G)$, but may not result in finding a critical edge set.  Our suggestion to address this problem is based on the following theorem.

\begin{theorem}
Suppose that $\theta(G)=\frac{p}{q}$ and  that $\frac{p'}{q'}<\frac{p}{q}$ satisfies
\begin{equation}\label{eq:pq-only-choice}
    \left(\frac{p'}{q'},\frac{p}{q}\right]
    \cap \Theta = \left\{\frac{p}{q}\right\}.
\end{equation}
Suppose further that $\tilde{x}$ is a $P(f)$-basis for $y\equiv\frac{p'}{q'}$ and that $\overline{J}$ is tight with respect to $\tilde{x}$.  Then $J$ is a critical set for $G$. 
\end{theorem}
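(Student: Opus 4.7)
The plan is to exploit the fact that lowering $y$ just below $\theta(G)$ forces Cunningham's algorithm to produce a non-full tight complement, whose complement $J$ will then be critical. Concretely, I would first unpack what it means for $\tilde{x}$ to be a $P(f)$-basis of $y$ with $\overline{J}$ tight: from equation~\eqref{eq:Pf-basis}, $\tilde{x}(j)=y(j)=p'/q'$ for every $j\in J$ and $\tilde{x}(\overline{J})=f(\overline{J})$. Adding these gives the key identity
\begin{equation*}
\tilde{x}(E) \;=\; \tfrac{p'}{q'}\,|J| + f(\overline{J}).
\end{equation*}

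Next I would apply Lemma~\ref{lem:fund-cunningham} in its contrapositive form. Since $\theta(G)=p/q>p'/q'$, the condition $\theta(G)\le p'/q'$ fails, so $\min_{J'\subseteq E}\bigl(\tfrac{p'}{q'}|J'|+f(\overline{J'})\bigr)<|V|-1$. Theorem~\ref{thm:polymatroid theorem} identifies $\tilde{x}(E)$ with this minimum, giving $\tilde{x}(E)<|V|-1$. Combining this with the identity above and rewriting $f(\overline{J})=|V|-1-\mathcal{M}(J)$ via~\eqref{eq:MJ-fJbar}, I arrive at $\mathcal{M}(J)>\tfrac{p'}{q'}|J|$, i.e.\ $\theta(J)>p'/q'$, provided $J$ is nonempty. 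The degenerate case $J=\emptyset$ must be ruled out separately: then $\overline{J}=E$, and tightness together with connectivity of $G$ would force $\tilde{x}(E)=f(E)=|V|-1$, contradicting the strict inequality.

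To finish, observe that $\theta(J)\le\theta(G)=p/q$ by maximality of $\theta(G)$, and $\theta(J)\in\Theta$ because $\mathcal{M}(J)\ge 1$ (forced by $\theta(J)>p'/q'>0$) and $1\le|J|\le|E|$. Hypothesis~\eqref{eq:pq-only-choice} then says that the only element of $\Theta$ in the half-open interval $(p'/q',\,p/q]$ is $p/q$ itself, which pins down $\theta(J)=p/q=\theta(G)$ and shows that $J$ is critical.

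I do not anticipate a substantive obstacle here; the argument is essentially a chain of definition unfoldings followed by an application of the gap hypothesis to eliminate alternatives. The only mildly delicate points are the exclusion of $J=\emptyset$ and the verification that the strict inequality $\tilde{x}(E)<|V|-1$ (rather than a weak inequality) is what both the gap hypothesis and the use of $<$ on line~5 of Algorithm~\ref{alg:Cunningham} are ultimately designed to deliver.
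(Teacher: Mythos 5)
Your proposal is correct and follows essentially the same route as the paper's proof: use $\tilde{x}=p'/q'$ on $J$ together with tightness of $\overline{J}$ and Theorem~\ref{thm:polymatroid theorem} to get $\tfrac{p'}{q'}|J|+f(\overline{J})=\tilde{x}(E)<|V|-1$ (ruling out $J=\emptyset$ via connectivity), rewrite with~\eqref{eq:MJ-fJbar} to obtain $p'/q'<\theta(J)\le\theta(G)=p/q$, and invoke the gap condition~\eqref{eq:pq-only-choice} to conclude $\theta(J)=p/q$. The only cosmetic difference is the order in which the case $\overline{J}=E$ is excluded, plus your explicit (and welcome) check that $\theta(J)\in\Theta$.
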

\begin{proof}
First, we show that $\overline{J}$ cannot be $E$.  If it were, then we would have
\begin{equation*}
|V|-1 = f(E) = \tilde{x}(E)
= \min_{J'\subseteq E}\left\{\frac{p'}{q'}|J'|+f(\overline{J'})\right\},
\end{equation*}
which, by Lemma~\ref{lem:fund-cunningham}, would imply that
\begin{equation*}
\theta(G) \le \frac{p'}{q'} < \frac{p}{q} = \theta(G),
\end{equation*}
yielding a contradiction.

Now, when Cunningham's algorithm terminates, we have both $\tilde{x}$ and the tight set $\overline{J}$ with the property that $\tilde{x}=\frac{p'}{q'}$ on $J\ne\emptyset$.  Moreover, since $\theta(G)>\frac{p'}{q'}$, Lemma~\ref{lem:fund-cunningham} implies that
\begin{equation*}
|V|-1 > 
\frac{p'}{q'}|J|+f(\overline{J}).
\end{equation*}
By~\eqref{eq:MJ-fJbar}, this implies that
\begin{equation*}
\frac{p'}{q'}|J| < |V|-1 - f(\overline{J}) = \mathcal{M}(J).
\end{equation*}
Thus, we have that
\begin{equation*}
\frac{p'}{q'}<\frac{\mathcal{M}(J)}{|J|}
\le\theta(G) = \frac{p}{q}.
\end{equation*}
Choosing $p'$ and $q'$ so that~\eqref{eq:pq-only-choice} holds guarantees that $\mathcal{M}(J)/|J|=p/q$ and, thus, that $J$ is critical.
\end{proof}

Thus, if Cunningham's algorithm produces the $\theta(G)$, but fails to produce a critical set, we can re-run Cunningham's algorithm with $y$ slightly smaller than $\theta(G)$ to obtain a critical set.  A choice for $p'$ and $q'$ can be found using the following simple lower bound on the distance between distinct numbers in $\Theta$.  If $p/q, p'/q'\in\Theta$ and $p'/q'<p/q$, then
\begin{equation}
  \frac{p}{q} - \frac{p'}{q'} = \frac{pq' -qp'}{qq'} \geq \frac{1}{|E|^2}.
\end{equation}
Thus, choosing
\begin{equation*}
\frac{p'}{q'} = \frac{p}{q} - \frac{1}{|E|^2}
= \frac{p|E|^2 -q}{q|E|^2}
\end{equation*}
guarantees that~\eqref{eq:pq-only-choice} holds.

\subsection{Modifications for exact arithmetic}

Our primary goal in this section is to build upon Cunningham's algorithm to construct an algorithm for computing spanning tree modulus in exact arithmetic.  We first observe that this is already possible using Algorithm~\ref{alg:Cunningham} as written, provided all computations are performed using rational arithmetic.  The main downside to this approach is that the arithmetic (which must be done in software) is much slower than arithmetic performed on hardware.  On the other hand, if a floating point representation is used to improve the speed of the algorithm, one naturally sacrifices exactness for numerical approximation, and the impact of these approximations do not appear to be analyzed in the literature.  As an alternative, this section presents an implementation using integer arithmetic, which can be performed on hardware.

The modification for integer arithmetic is based on the observation that if $f$ is a polymatroid function and if $q>0$, then $qf$ is a polymatroid function.  Moreover, there is a straightforward connection between $P(f)$-bases and $P(qf)$-bases.

\begin{lemma}\label{lem:Pf-Pqf}
Let $x$ be a $P(f)$-basis of y. Then $qx$ is a $P(qf)$-basis of $qy$, for any $q>0$.
\end{lemma}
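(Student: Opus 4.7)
The plan is to verify the two defining conditions of a $P(qf)$-basis directly by scaling, and then handle maximality by a contradiction argument that again uses scaling.

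First, I would show that $qx$ lies in the polymatroid $P(qf)$. Since $q>0$ and $x \ge 0$, we have $qx \ge 0$. For any $J\subseteq E$, multiplying the polymatroid inequality $x(J)\le f(J)$ by $q$ gives $(qx)(J)\le (qf)(J)$, so indeed $qx\in P(qf)$. Similarly, the coordinate-wise inequality $x\le y$ scales to $qx\le qy$.

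Next, I would establish that $qx$ is maximal among vectors in $P(qf)$ bounded above by $qy$. Suppose for contradiction that some $z\in P(qf)$ satisfies $qx\le z\le qy$ with $z\ne qx$. Define $x' := z/q$. Because $q>0$, the inequalities in the definition of $P(qf)$ scale back: for every $J\subseteq E$,
\begin{equation*}
x'(J) = \frac{1}{q}z(J) \le \frac{1}{q}(qf)(J) = f(J),
\end{equation*}
and $x'\ge 0$, so $x'\in P(f)$. Moreover $x\le x'\le y$ and $x'\ne x$, contradicting the maximality of $x$ as a $P(f)$-basis of $y$. Hence no such $z$ exists, and $qx$ is a $P(qf)$-basis of $qy$.

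The argument is essentially a two-line scaling computation; there is no real obstacle. The only subtlety worth flagging is that one must check maximality, not merely membership in the polytope with the upper bound, since that is the content of ``basis.'' The scaling $z\mapsto z/q$ gives a bijection between the candidate dominators in each setting, so maximality transfers immediately.
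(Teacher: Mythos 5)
Your proof is correct and follows essentially the same route as the paper's: verify membership $qx\in P(qf)$ and $qx\le qy$ by scaling, then transfer maximality via the map $z\mapsto z/q$ back into $P(f)$ (the paper phrases the first step as the identity $P(qf)=qP(f)$, but the content is identical). No gaps to report.
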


\begin{proof}
By definition,
\begin{equation}
\begin{split}
    P(qf) &= \{ x \in \mathbb{R}_{\ge 0}^{E}:~ x(J) \leq qf(J)\quad \forall J \subseteq E \}\\
    &= \{ x \in \mathbb{R}_{\ge 0}^{E}:~ \frac{1}{q}x(J) \leq  f(J)\quad \forall J \subseteq E \}\\
     &= \{ qx \in \mathbb{R}_{\ge 0}^{E}:~ x(J) \leq f(J)\quad \forall J \subseteq E \} = qP(f).
\end{split}
\end{equation}

Since $x$ is a $P(f)$-basis of $y$, we have $qx \leq qy$. Thus, in order to show that $qx$ is a $P(qf)$ basis of $qy$, we need only verify maximality.  If there exists $z \in P(qf)$ such that $z \leq q y$ and $ qx \leq z$ then $\frac{1}{q}z \in P(f)$, $\frac{1}{q}z \leq y$, and $x \leq \frac{1}{q}z$, which implies that $\frac{1}{q}z = x$ and, therefore, that $z = qx$.
\end{proof}

\begin{algorithm}
\caption{Integer arithmetic version of Cunningham's algorithm\\ Input: $G=(V,E), p, q$}\label{alg:Cunningham-integer}
\begin{algorithmic}[1]
	\State $x' \gets 0$ 
	\State $\overline{J} \gets \emptyset$
	\ForAll{$j \in E$}
	\State $\epsilon, J'(j) \gets \min \{qf(J')-x'(J') : j \in J'\subseteq E\}$
	\If
	{$\epsilon < p - x'(j)$}
	\State $\overline{J} \gets \overline{J} \cup J'(j)$
	\Else
	{}
	\State $\epsilon = p -x'(j)$
	\EndIf
	\State $x'(j) \gets x'(j) + \epsilon$
	\EndFor
		\State\Return $x', J = E\setminus\overline{J}$
\end{algorithmic}
\end{algorithm}

In light of Lemma~\ref{lem:Pf-Pqf}, consider Algorithm~\ref{alg:Cunningham-integer}.  Through a step-by-step comparison, one can see that  Algorithm~\ref{alg:Cunningham} (with $y=\frac{p}{q}$) and Algorithm~\ref{alg:Cunningham-integer} are equivalent in the sense that $x'=qx$ and both produce the same $\overline{J}$.  An important aspect of this comparison is the relationship between line 4 in each algorithm.  Lemma~\ref{lem:Pf-Pqf} shows that these two steps effectively compute the same thing, with $\epsilon$ differing only by the multiplicative constant $q$.

Note that $x'$ is initialized as an integer vector in line 1 of Algorithm~\ref{alg:Cunningham-integer}. Since $qf(J')$ is an integer for any $J'\subseteq E$, $\epsilon$ is also an integer in line 4. Since $p$ is an integer $\epsilon$ remains an integer even if line 8 is executed. Thus, in line 10, the updated $x'$ remains an integer vector. In each step described, only integer arithmetic is needed.  All that remains, then, is to verify that the computation on line 4 can be performed using integer arithmetic.

This is the consequence of a straightforward modification to the minimum cut algorithm described in Section~\ref{sec:min-cut} (Figure~\ref{fig:graph-cut1} may help visualize the argument).  If each capacity in the graph $G'$ is multiplied by $2q$, the resulting capacities are integer valued.  Edges which had capacity $1$ now have capacity $2q$; edges which had capacity $x(e)$ now have capacity $2x'(e)$; and edges which had capacity $\frac{1}{2}x(\delta(v))$ now have capacity $x'(\delta(v))$.  Moreover, any minimum $rs$-cut of $G'$ with the original capacities is also a minimum $rs$-cut with the new capacities and vice versa.  To perform line 4 of Algorithm~\ref{alg:Cunningham-integer}, then, we may generate $G'$ as before, but with all capacities multiplied by $2q$.  A minimum cut can now be found using an integer arithmetic maximum flow algorithm.  The edges of this cut are the edges $J'(j)$ that we seek and the value of the cut is $2\epsilon$ (twice the largest increment we can make to $x'(j)$ while remaining in $P(qf)$).  The value of $\epsilon$ can then be found by integer division.

%---------------------------------------------------
\section{The spanning tree modulus algorithm}
\label{sec:spt-mod-algorithm}

%%---------------------------------------------------
As outlined in Section~\ref{sec:vulnerability-to-modulus}, the modified version of Cunningham's algorithm presented in Section~\ref{sec:cunningham-mods} allows us to compute the spanning tree modulus of $G$ in exact arithmetic.  Pseudocode for the algorithm is shown in Algorithm~\ref{alg:modulus}.

\begin{algorithm}
\caption{Modulus using graph vulnerability\\ Input: $G=(V,E)$}\label{alg:modulus}
\begin{algorithmic}[1]
	\State $q \gets \text{queue}([G])$
	\While{$q$ is not empty}
	\State $G'\gets \text{pop}(q)$
	\State compute $\theta(G')$ and a critical set $J$
	\ForAll{$e\in J$}
	\State $\eta^*(e)\gets\theta(G')$
	\EndFor
	\State remove edges in $J$ from $G'$
	\ForAll{nontrivial connected components $G''$ of $G'$}
	\State $\text{push}(q,G'')$
	\EndFor
	\EndWhile
\end{algorithmic}
\end{algorithm}

We initialize a queue of graphs with the initial graph $G$.  While this queue is not empty, we remove a graph $G'$ from the queue and compute its vulnerability along with a critical set of edges.  As described in Section~\ref{sec:vulnerability-to-modulus}, this tells us the value of the optimal $\eta^*$ on the edges of $J$.  Applying Theorem~\ref{thm:eta-star-restricts}, we then remove $J$ from $G'$ and add all nontrivial connected components of the resulting graph back into the queue for processing.  Once the queue is empty, we will have found the value of $\eta^*$ on all edges of $G$.

\subsection{Examples}

Here, we present a few examples of spanning tree modulus computed using the new algorithm.

\subsubsection{Small step-by-step example}
Consider the graph in Figure~\ref{fig:spt-algorithm} (Zachary's karate club network). The sequence of figures shows the order in which values of $\eta^*$ are discovered by Algorithm~\ref{alg:modulus}. The critical set for the original graph is the single edge highlighted in the first subfigure.  Since every spanning tree of $G$ must use this edge, its $\eta^*$ value is $1$. When this edge is removed indicated by the dashed line in the next subfigure, the graph is split into two components, only one of which is nontrivial.  Again, the critical set found by the algorithm is highlighted.  This time, the associated $\eta^*$ is $1/2$.  When these edges are removed, there is again a single nontrivial component, as shown in the third subfigure, and the algorithm proceeds by finding a critical set of this subgraph.  After a few more steps, $\eta^*$ is known on all edges and the algorithm terminates.  The modulus can be computed from $\eta^*$ as
\begin{equation*}
\Mod(\Gamma) = \left(
1 
+ 30\left(\frac{1}{2}\right)^2
+ 5\left(\frac{2}{5}\right)^2
+ 8\left(\frac{3}{8}\right)^2
+ 34\left(\frac{6}{17}\right)^2
\right)^{-1}
= \frac{680}{9969}.
\end{equation*}

\begin{figure}
	\begin{center}
		\includegraphics[width=\textwidth]{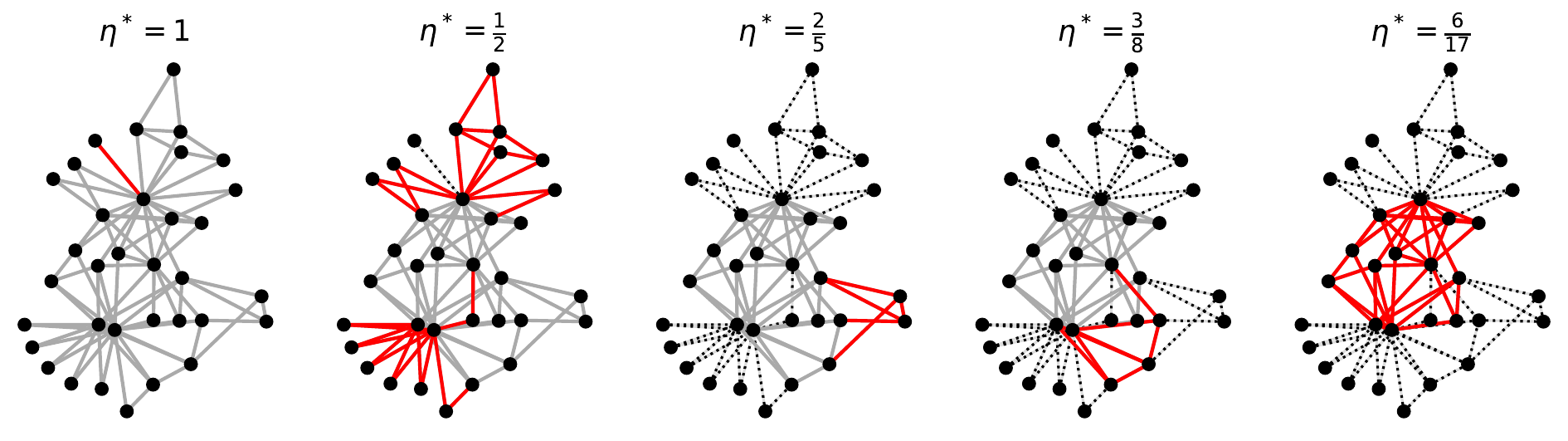}
		\caption{Steps of the spanning tree modulus applied to Zachary's karate club graph.}
	\label{fig:spt-algorithm}
	\end{center}
\end{figure}

\subsubsection{Timing experiments}\label{sec:timing}

Since $\eta^*$ is found for at least one edge in each iteration of Algorithm~\ref{alg:modulus}, the algorithm calls Cunningham's algorithm (Algorithm~\ref{alg:Cunningham}) $O(|E|)$ times. Since Algorithm~\ref{alg:Cunningham} completes in polynomial time, so does Algorithm~\ref{alg:modulus}. Figure~\ref{fig:time-complexity} demonstrates the efficiency of the method.  For this figure, we considered four classes of graphs, as described below.  For each class of graph, we generated a number of test cases on which we ran a C++ implementation of the algorithm to compute modulus.  Then, on a logarithmic scale, we performed a least-squares linear regression to find an approximate run time complexity of the form $c|E|^p$.  In all cases, the run-time appears to be no worse than order $O(|E|^3)$.  The four classes of graph used in this experiment are as follows.
\begin{enumerate}
    \item The complete graphs $K_n$ with $n=3,4,\ldots,40$.  For these graphs, the entire edge set is critical.
    \item A class of multipartite graphs parameterized by an integer $k\ge 2$.  The vertices of the graph are partitioned into $k$ sets $V_1,V_2,\ldots,V_k$ with $|V_i|=i$.  For $i=1,2,\ldots,k-1$, every vertex in $V_i$ is connected to every vertex in $V_{i+1}$.  This test included values of $k$ between $2$ and $16$.  These graphs tend to have optimal $\eta^*$ which take $k-1$ distinct values.
    \item Erd\"os--R\'enyi $G(n,p)$ graphs with $n$ chosen randomly in the range $[10,200]$ and $p=2\log(n)/n$.  These graphs tend have $\eta^*$ with only a few distinct values.
    \item Random geometric graphs formed by placing $n$ vertices in the unit square independently and uniformly at random and then connecting vertices with Euclidean distance closer than radius $r=3/\sqrt{n}$.  Values of $n$ were chosen uniformly from the range $[10,100]$.  These graphs tend to have $\eta^*$ that take a variety of distinct values.
\end{enumerate}

\begin{figure}
    \centering
    \includegraphics[width=0.5\textwidth]{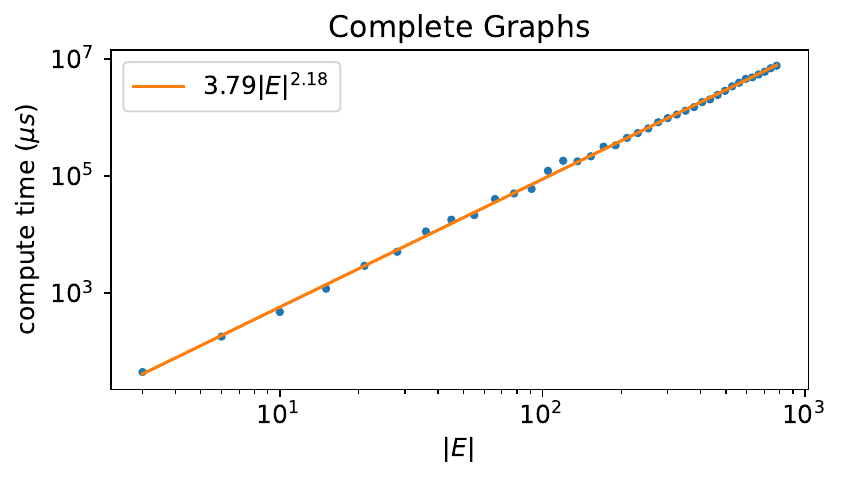}%
    \includegraphics[width=0.5\textwidth]{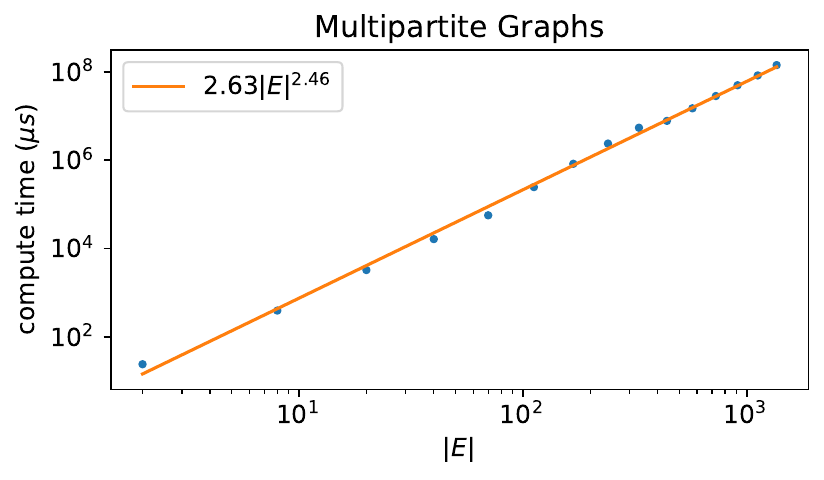}
    \includegraphics[width=0.5\textwidth]{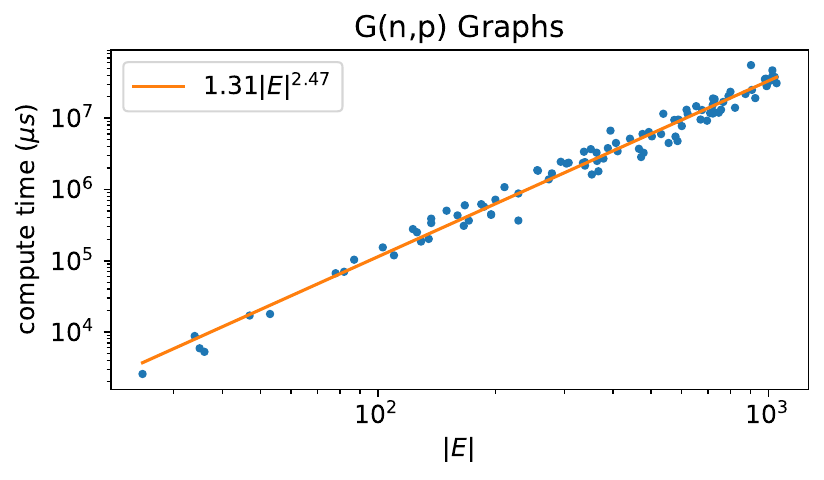}%
    \includegraphics[width=0.5\textwidth]{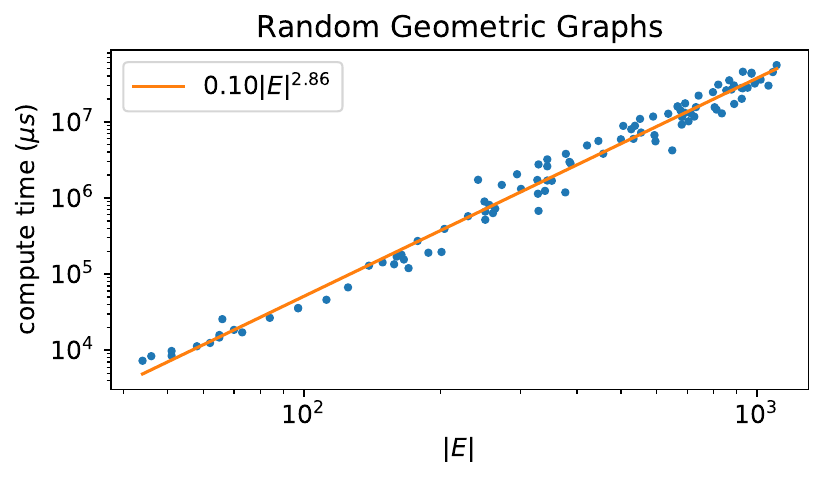}
\caption{Run-time complexity scaling tests as described in Section~\ref{sec:timing}.}
    \label{fig:time-complexity}
\end{figure}

\subsubsection{Modulus of the \emph{C.~elegans} metabolic network}\label{sec:celegans}

As a final example, we consider the spanning tree modulus of the graph formed from the \emph{C.~elegans} metabolic network, found in~\cite{duch2005community}.  The data for this graph were downloaded from~\cite{celegans}.  After the network is symmetrized and self-loops are removed, the resulting undirected graph has 453 vertices, 2025 edges, and approximately $6.6\times 10^{329}$ spanning trees.  The optimal $\eta^*$ on this graph takes 32 distinct values.  Figure~\ref{fig:celegans} provides a visualization of these values.  Each edge is colored based on its $\eta^*$ value.  Each vertex is sized and colored based on the smallest value of $\eta^*$ among its incident edges.  (Larger vertices correspond to smaller values of $\eta^*$.) The hierarchical structure of the graph can be seen in the figure; the highly connected ``core'' vertices are seen on the left side of the figure, while the more loosely connected peripheral vertices are seen on the right.

\begin{figure}
    \centering
    \includegraphics[width=\textwidth]{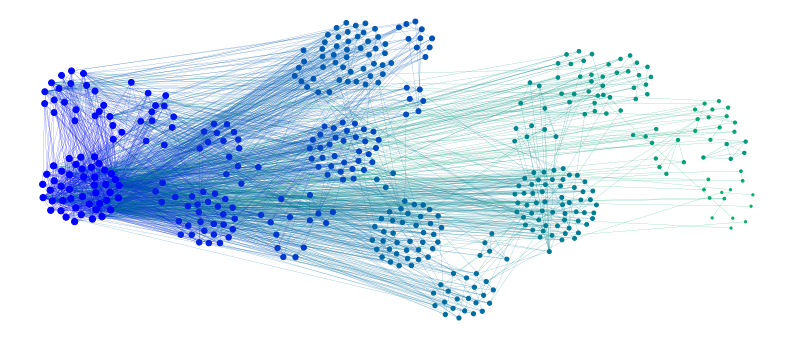}
    \caption{Visualization of the spanning tree modulus for the \emph{C.~elegans} metabolic network as described in Section~\ref{sec:celegans}.}
    \label{fig:celegans}
\end{figure}

\section*{Acknowledgments}
This material is based upon work supported by the National Science Foundation under grants No.~1515810 and No.~2154032.

\bibliographystyle{acm}
\bibliography{references}
\end{document}